\documentclass[12pt,reqno]{amsart}
\usepackage{amscd,amsmath,amsthm,amssymb}
\usepackage{color}
\usepackage{tikz}
\usepackage{url}
\usepackage{circuitikz}
\usepackage{float}
\usetikzlibrary{positioning}

\newtheorem{Theorem}{Theorem}[section]
\newtheorem{Lemma}[Theorem]{Lemma}
\newtheorem{Corollary}[Theorem]{Corollary}
\newtheorem{Proposition}[Theorem]{Proposition}
\newtheorem{Remark}[Theorem]{Remark}

\newtheorem{Example}[Theorem]{Example}

\newtheorem{Question}[Theorem]{Question}

\def\qed{\ifhmode\textqed\fi
	\ifmmode\ifinner\quad\qedsymbol\else\dispqed\fi\fi}
\def\textqed{\unskip\nobreak\penalty50
	\hskip2em\hbox{}\nobreak\hfill\qedsymbol
	\parfillskip=0pt \finalhyphendemerits=0}
\def\dispqed{\rlap{\qquad\qedsymbol}}

\def\height{\textup{height}}

\def\depth{\textup{depth\,}}

\def\pd{\textup{proj\,dim}}
\def\supp{\textup{supp}}
\def\reg{\textup{reg}}

\begin{document}
	
	\title{On homological invariants and Cohen-Macaulayness of closed neighborhood ideals}
	\author{Somayeh Moradi, Leila Sharifan}

	\address{Somayeh Moradi, Department of Mathematics, Faculty of Science, Ilam University, P.O.Box 69315-516, Ilam, Iran}
	\email{so.moradi@ilam.ac.ir}
	
		\address{Leila Sharifan, Department of Mathematics, Hakim Sabzevari University, P.O. Box 397, Sabzevar, Iran}
	\email{l.sharifan@hsu.ac.ir}
	
	\subjclass[2020]{Primary 13D02, 05E40; Secondary 13C05, 13A02}
	\keywords{Closed neighborhood ideal, regularity, projective dimension, Cohen-Macaulay}
	
	\begin{abstract}
Let $G$ be a finite simple graph and $NI(G)$ be the closed neighborhood ideal of $G$ in the polynomial ring $S=K[V(G)]$.
In this paper, we study the  Castelnuovo-Mumford regularity,   projective dimension and Cohen-Macaulayness of this ideal. For any chordal graph $G$, we show that
$\text{reg}(S/NI(G))=\tau(G)$, where $\tau(G)$ denotes the vertex cover number of $G$. This generalizes the corresponding result for trees shown in \cite{CJRS}, as in trees $\tau(G)$ is the same as the matching number of $G$.  When $G$ is a bipartite graph or a very well-covered graph, we notice that $\text{reg}(S/NI(G))\geq \tau(G)$ and that this inequality can be strict in general. Moreover, we describe the projective dimension of $S/NI(G)$ for some families of graphs. Finally, we give a characterization of very well-covered graphs $G$ for which the ring $S/NI(G)$ is Cohen-Macaulay.
	\end{abstract}
	
	\maketitle
	\vspace*{-1.8em}
\section{Introduction}

The fruitful interplay between combinatorics and commutative algebra has been a central theme of research over the past several decades. A particularly rich source of interaction arises from the study of squarefree monomial ideals associated with combinatorial objects such as graphs, hypergraphs, and simplicial complexes. Through this correspondence, algebraic invariants of monomial ideals often encode subtle combinatorial properties, while combinatorial techniques provide powerful tools for studying homological and structural aspects of ideals.
A classical and extensively studied example is the edge ideal of a graph, introduced by Villarreal \cite{V}. 
Several other classes and generalizations have since been proposed to capture richer combinatorial structures, including path ideals \cite{B,CD, HeV}, 
$t$-clique ideals \cite{KM,M}, complementary edge ideals \cite{FM,FM1,HQS}, and facet ideals of simplicial complexes \cite{Fa}.

More recently,  the authors introduced the closed neighborhood ideal 
$NI(G)$ of a graph $G$ in \cite{SM}. This ideal is closely related to dominating sets of $G$, as the minimal prime ideals of $NI(G)$ correspond to minimal dominating sets of $G$. This suggests a rich interplay of combinatorics of the graph and algebraic properties of the ideal. The dominance complex of a graph $G$ which is the Stanley-Reisner complex of $NI(G)$ was studied by Matsushita and Wakatsuki \cite{MW}, where they showed that this complex is the Alexander dual of the neighborhood complex of the complement of $G$. The neighborhood complex of a  graph was introduced and used by  Lov$\acute{}$asz in the proof of  Kneser’s conjecture \cite{L}. Both dominance and  neighborhood complexes are fundamental objects in combinatorics and have been studied extensively in the literature.

Let $G$ be a finite simple graph with the vertex set $V(G) =[n]$ and let $S=K[x_1,\ldots,x_n]$ be the polynomial ring in $n$ variables over a field $K$. The {\em closed neighborhood ideal} of $G$ is a squarefree monomial ideal in $S$, defined as
$NI(G)=(\prod_{j\in N_G[i]} x_j: i\in [n]),$ where   
$N_G[i]$ denotes the closed neighborhood of $i$ in $G$. 
Since their introduction, considerable effort has been devoted to analysing closed neighborhood ideals from both algebraic and combinatorial perspectives, with particular emphasis on their structural and homological properties. Moreover, recently some related ideals, namely  open neighborhood ideals of graphs and  $t$-closed neighborhood ideals  were introduced and studied by Lim, et al. \cite{Lim,Lim1}, and Sharifan \cite{S1}, respectively.

It was shown in \cite{SM} that the matching number $a(G)$ of graph $G$ is a lower bound for $\reg(S/NI(G))$, provided that $G$ is a tree, and it was conjectured that this inequality is indeed an equality.
This result was refined in two ways by Chakraborty, et al. in \cite{CJRS}, where they proved the conjecture, and further showed that the inequality holds true for any graph.
Further combinatorial descriptions for the regularity or projective dimension of closed neighborhood ideals  for some families of graphs like  $m$-book graph and complete multipartite graphs can be found in \cite{SM}, and for unicyclic graphs and forests in \cite{CJRS}. 
 Joseph, Roy and Singh \cite{J} studied minimal free resolutions of closed neighborhood ideals within the framework of Barile-Macchia resolutions, and showed that for any tree $T$, the Barile-Macchia resolution of $NI(T)$  is minimal. Hien  and Vu \cite{HV} studied associated primes of the second power of closed neighborhood ideals. 
In \cite{N1,N2,N3}, Nasernejad, et al. studied the normally torsion-free property, normality and  persistence property of closed neighborhood ideals. The Cohen–Macaulayness of closed neighborhood ideals of trees was studied by Honeycutt and Sather-Wagstaff in \cite{HS} and of chordal graphs and bipartite graphs by Leaman in \cite{L}, where complete combinatorial characterizations were given. 
Despite these developments, many fundamental questions concerning the homological behaviour of closed neighborhood ideals remain open. In particular, the relationship between the Castelnuovo–Mumford regularity, projective dimension, and Cohen–Macaulayness of 
$S/NI(G)$ and the combinatorial structure of 
$G$ is not yet fully understood. The purpose of this paper is to further explore these connections, with special emphasis on chordal graphs and very well-covered graphs, and to provide new combinatorial interpretations of algebraic invariants associated with closed neighborhood ideals.

The paper is structured as follows. In Section 2, we recall some basic notation and terminology from commutative algebra and combinatorics that will be needed in the paper.
In Section 3, we  study the regularity and the projective dimension of closed neighborhood ideals. As one of the main results, in Theorem \ref{chordalReg}, we prove that for any chordal graph $G$, we have $\reg(S/NI(G))=\tau(G)$, where 
$\tau(G)$ denotes the vertex cover number of $G$. When $G$ is a tree, it is known by König's theorem that $\tau(G)$ is the same as the matching number $a(G)$ of $G$. Hence, Theorem \ref{chordalReg} gives a generalization of the equality $\reg(S/NI(G))=a(G)$, proved for trees in  \cite[Theorem 1.1]{CJRS}. Another family of graphs for which the equality $\reg(S/NI(G))=\tau(G)$ holds, is given in Theorem \ref{ISGen}. In Example \ref{lu}, we see that in general $\tau(G)$ is neither an upper bound nor a lower bound for the regularity of $S/NI(G)$, as for any complete bipartite graph $G=K_{m,n}$ with $m,n>1$ and $(m,n)\neq (2,2)$, we have  $\reg(S/NI(G))>\tau(G)$ and for the $5$-cycle $C_5$ we have $\reg(S/NI(C_5))<\tau(C_5)$.  However, if $G$ is a  bipartite graph or a very well-covered graph, then we have $\reg(S/NI(G))\geq \tau(G)$,
see Proposition \ref{ltau}. In Proposition \ref{bounds}, we show that the independence number $\alpha(G)$ of $G$ is a lower bound for the projective dimension of $S/NI(G)$, and in Theorem \ref{ISGen} and Proposition \ref{lPd} we present  families of graph, for which the projective dimension achieves this bound. Finally, in Theorem \ref{multiplicity} a description of the multiplicity of  $S/NI(G)$ in terms of the dominating sets of $G$ is given.

The main result of Section 4 is Theorem \ref{VW-CM}, where we give a combinatorial characterization of the Cohen-Macaulayness of $S/NI(G)$ for very well-covered graphs. To this aim, we use a characterization of well-dominated graphs for which the height of $NI(G)$ is half of the number of  vertices of $G$, given in \cite{L}. Theorem \ref{VW-CM} helps to recover the characterization of bipartite graphs for which $S/NI(G)$ is  Cohen-Macaulay, proved by Leaman in \cite{L}. Moreover, using a result due to Topp and Volkmann \cite{TV}, we are able to add the equivalent condition $\gamma(G)=\alpha(G)$ to the characterization given by Leaman. The equality $\gamma(G)=\alpha(G)$ is in general only a necessary condition for the Cohen-Macaulayness of $S/NI(G)$.   Finally, we highlight a purely combinatorial application of another result by Leaman \cite{TV} on the characterization of the Cohen-Macaulayness of $S/NI(G)$ for chordal graphs. More precisely, in Corollary \ref{alphaGamma} we give a classification of chordal graphs $G$ satisfying $\gamma(G)=\alpha(G)$.
This generalizes the analogous result proved for block graphs in \cite{TV}.

 \section{Preliminaries}
 
 In this section we recall the notation and terminology from commutative algebra and combinatorics, that will be used in the paper. 
 Throughout, $K$ will denote an infinite field and $S= K[x_1,\ldots,x_n]$ will be a polynomial ring over $K$.

Let $G$ be a finite simple graph with the vertex set $V(G)=[n]$ and the edge set $E(G)$.
For any vertex $i\in [n]$, the {\em open  neighborhood} of $i$ is the set $N_G(i)=\{j\in [n]:\, \{i,j\}\in E(G)\}$, and the {\em closed neighborhood} of $i$ is $N_G[i]=N_G(i)\cup\{i\}$. 
A subset $D\subseteq V(G)$ is called a \textit{dominating set} of $G$, if $D\cap N_G[i]\neq \emptyset$  for any vertex $i$ of $G$.  Also a dominating set $D$ of $G$ is called a \textit{minimal dominating set} of $G$, if no proper subset of $D$ is a dominating set of $G$. We call $G$ {\em well-dominated}, if all minimal dominating sets of $G$ have the same size.
  The \textit{domination number} of $G$ denoted by $\gamma(G)$, is the minimum size of a dominating set of $G$. Moreover, we set $$\gamma_0(G)=\max\{|D|: D \textrm{ is a minimal dominating set of } G\}.$$

A {\em vertex cover of $G$} is a subset $C\subseteq V(G)$ such that $C\cap e\neq \emptyset$ for any $e\in E(G)$. A vertex cover $C$ is called a {\em minimal vertex cover} of $G$, if no proper subset of $C$ is a vertex cover of $G$. The minimum size of a   vertex cover of $G$ is called the {\em vertex cover number} of $G$ and is denoted by $\tau(G)$.  
A subset $A\subseteq V(G)$ is called an {\em independent set} of $G$, if $A$ contains no edge of $G$. A {\em maximal independent set} of $G$ is an independent set which is maximal with respect to inclusion, among maximal independent sets of $G$.  The maximum size of an independent set  of $G$ is called the {\em independence number} of $G$ and is denoted by $\alpha(G)$. 
It is easily seen that a subset $A$ is an independent set of $G$ if and only if $V(G)\setminus A$ is a vertex cover of $G$. Thus $\alpha(G)=n-\tau(G)$.  
A graph $G$ is called {\em well-covered}, if all maximal independent sets of $G$ have the same cardinality. It follows from definitions that
\begin{Remark}\label{covered}
	Any maximal independent set of $G$ is a minimal dominating set of $G$. Hence, $\gamma(G)\leq \alpha(G)$, and any well-dominated graph is well-covered.
\end{Remark}

A subset $M$ of $E(G)$ is called a \textit{matching} for $G$, if any two edges in $M$ are disjoint and a matching $M$ is called a \textit{maximal matching} of $G$ if it is not properly contained in another matching of $G$. A {\em perfect matching} of $G$ is a matching that covers $V(G)$, i.e., any vertex of $G$ is an endpoint of an edge in the matching. The \textit{matching number} of $G$ is the maximum size of a matching in $G$ and is denoted by $a(G)$. It follows from the definitions that $a(G)\leq \tau(G)$  for any graph $G$. A graph $G$ is called a {\em König graph}, if $\tau(G)=a(G)$. In  1931, König showed that any bipartite graph is a König graph.

A {\em clique} of $G$ is a subset $B$ of $V(G)$ such that the induced subgraph $G[B]$ of $G$ on the vertex set $B$ is a complete graph. A clique $B$ is a {\em maximal clique} of $G$, if it is not properly contained in another clique of $G$. 
A vertex $i\in V(G)$ is called a {\em free vertex} of $G$, if it belongs to precisely one maximal clique of $G$.  A {\em leaf} of the graph $G$ is a vertex of degree one.
A graph $G$ is called {\em chordal}, if any cycle of length $m\geq 4$ in $G$ has a chord, which is an edge that is not part of the cycle but connects two vertices of the cycle.
A vertex $i$ of $G$ is called a {\em simplicial vertex} of $G$, if $N_G[i]$ is a clique. By a result of Dirac (1961), it is known that every chordal graph has a simplicial vertex. A {\em tree} is a connected graph with no cycles. A {\em spanning tree} of a graph $G$ is a  subgraph $T$ of $G$ such that $T$ is a tree and $V(T)=V(G)$.

A {\em very well-covered} graph is a well-covered graph without isolated vertices such that the size of any minimal vertex cover  (maximal independent set) is half of the number of vertices of the graph.
Any very well-covered graph $G$ on $2n$ vertices has a perfect matching of size $n$. Hence, it is a König graph with $\tau(G)=a(G)=n$.

The graph $G$ is called a {\em whisker graph}  if every vertex of $G$ has degree one, or has a unique degree-one neighbor.  
For a subset $W\subseteq V(G)$, by $G\setminus W$ we mean a graph obtained from $G$ by removing the vertices in $W$ and all the edges adjacent to a vertex in $W$. If $W=\{i\}$, then we denote $G\setminus W$ simply by $G\setminus i$. For a subset $\{i,j\}\subset V(G)$, by $G\cup \{\{i,j\}\}$, we mean a graph obtained from $G$ by adding the edge $\{i,j\}$ to   $G$.

Let $G$ be a graph with the vertex set $[n]$, and let $S=K[x_1,\ldots,x_n]$ be the polynomial ring over a field $K$. For any vertex $i$, we set $u_{i,G}=\prod_{j\in N_G[i]} x_j$ to be a monomial in $S$.
The {\em closed neighborhood ideal} of $G$ is a squarefree monomial ideal of $S$ defined as
$$NI(G)=(u_{i,G}: i\in V(G)).$$  
By \cite[Lemma 2.2]{SM}, the minimal prime ideals of $NI(G)$ are of the form $P_D=(x_i:\, i\in D)$, where $D$ is a minimal dominating set of $G$. So,  $\dim(S/NI(G))=n-\gamma(G)$.

 	\section{Homological invariants of closed neighborhood ideals}\label{sec1}
 	
  In this section, we study the Castelnuovo-Mumford regularity, projective dimension and multiplicity of the ring $S/NI(G)$.
 	When $G$ is a tree, it was shown in \cite{CJRS} that $\reg(S/NI(G))=a(G)$. Since any tree is a bipartite graph, by a result of König it follows that for any tree $G$, the equality $a(G)=\tau(G)$ holds, and hence 	$\reg(S/NI(G))=\tau(G)$. In Theorem \ref{chordalReg}, we generalize this result to any chordal graph. To this aim we use the following lemma.

 	\begin{Lemma}\cite[Corollary 4.8]{CHHK} \label{reg}
 	Let $I\subset S$ be a monomial ideal and $x$ be a variable of $S$. Then
 \begin{enumerate}
 	\item [(i)] $\reg(S/I)\leq \max\{\reg(S/(I:x))+1,\reg(S/(I,x))\}$.
 	\item [(ii)] $\reg(S/(I,x))\leq \reg(S/I)$.
 \end{enumerate}
 	\end{Lemma}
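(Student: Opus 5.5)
The plan for (i) is to use the standard short exact sequence coming from multiplication by $x$, and for (ii) to compare multigraded Betti numbers through a restriction argument.

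\emph{Part (i).} Since $I$ is a monomial ideal, the kernel of multiplication by $x$ on $S/I$ is $(I:x)/I$. This gives a graded short exact sequence
$$0\longrightarrow S/(I:x)(-1)\xrightarrow{\ \cdot x\ } S/I\longrightarrow S/(I,x)\longrightarrow 0 .$$
For a short exact sequence $0\to A\to B\to C\to 0$ of finitely generated graded $S$-modules, the long exact sequence of $\mathrm{Tor}^S(-,K)$ gives $\reg(B)\leq\max\{\reg(A),\reg(C)\}$. Since $\reg(S/(I:x)(-1))=\reg(S/(I:x))+1$, part (i) follows at once. This step is routine.

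\emph{Part (ii), reduction.} First I reduce to an ideal in fewer variables. Let $S'$ be the polynomial ring on the variables other than $x$. Let $I'\subset S'$ be the ideal generated by those minimal monomial generators of $I$ that are not divisible by $x$. Then $(I,x)=I'S+(x)$, so $S/(I,x)\cong S'/I'$, and $\reg(S/(I,x))=\reg(S'/I')$. Moreover $I'S$ has the same graded Betti numbers over $S$ as $I'$ has over $S'$, because the extension is flat. It therefore suffices to show
$$\beta_{i,\mathbf a}(I')=\beta_{i,\mathbf a}(I)\quad\text{for every multidegree } \mathbf a\in\mathbb N^n \text{ with } a_x=0 .$$
Granting this, $\reg(S'/I')=\max\{|\mathbf a|-i : \beta_{i,\mathbf a}(S'/I')\ne 0\}$ is a maximum over a subset of the pairs that compute $\reg(S/I)$. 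Hence $\reg(S/(I,x))\le\reg(S/I)$.

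\emph{Part (ii), key step.} The Betti number equality is where the real content lies, and I will obtain it from the upper Koszul simplicial complex. We have $\beta_{i,\mathbf a}(I)=\dim_K\tilde H_{i-1}(K^{\mathbf a}(I);K)$, where
$$K^{\mathbf a}(I)=\{F\subseteq\supp(\mathbf a): x^{\mathbf a-F}\in I\}.$$
This complex depends only on the monomials of $I$ dividing $x^{\mathbf a}$. When $a_x=0$, a monomial dividing $x^{\mathbf a}$ lies in $I$ if and only if it is divisible by a generator of $I$ not involving $x$, that is, if and only if it lies in $I'$. So $K^{\mathbf a}(I)=K^{\mathbf a}(I')$, and the Betti numbers agree.

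As an alternative to the upper Koszul complex, I could restrict the Taylor resolution, or the lcm lattice, to the generators dividing $x^{\mathbf a}$. I expect the only care needed is to phrase this restriction correctly and to check that passing from $S'$ to $S$ does not change Betti numbers.
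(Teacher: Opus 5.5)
Your proof is correct. The paper gives no argument for this lemma; it is quoted from \cite{CHHK}, a paper on depth and regularity of $(I,f)$ and $I:f$. So there is no internal proof to compare with, and yours serves as a self-contained one.

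Part (i) is the standard exact-sequence argument. It works for any homogeneous ideal and any linear form, so the monomial hypothesis is not needed there.

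Part (ii) is the restriction principle for multigraded Betti numbers. Because $K^{\mathbf a}(I)=K^{\mathbf a}(I')$ whenever $a_x=0$, you get $\beta^{S'}_{i,\mathbf a}(S'/I')=\beta^{S}_{i,\mathbf a}(S/I)$ for all such $\mathbf a$. This is stronger than what the lemma asks: it also yields $\pd_{S'}(S'/I')\le \pd_S(S/I)$.

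The one step you state without justification is $\reg_S(S/(I,x))=\reg_{S'}(S'/I')$. It is standard, but worth a line:
\begin{itemize}
\item $x$ is a nonzerodivisor on $S/I'S=(S'/I')[x]$.
\item Hence the minimal $S$-resolution of $S/(I,x)$ is the extended resolution of $S'/I'$ tensored with the Koszul complex $0\to S(-1)\xrightarrow{x}S\to 0$.
\item This only adds shifted copies $(i,j)\mapsto(i+1,j+1)$, which leaves $\max\{j-i\}$ unchanged.
\end{itemize}
Alternatively, use the local cohomology description of regularity, which does not depend on the presentation.
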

 	
 	\begin{Theorem}\label{chordalReg}
 		Let $G$ be a chordal graph. Then $\reg(S/NI(G))=\tau(G)$.
 	\end{Theorem}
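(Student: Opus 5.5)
We prove the two inequalities $\reg(S/NI(G))\le\tau(G)$ and $\reg(S/NI(G))\ge\tau(G)$ separately, both by induction on $|V(G)|$. Both rest on the same combinatorial recursion for chordal graphs. By Dirac's theorem $G$ has a simplicial vertex $v$; put $k=|N_G[v]|$ and $G'=G\setminus N_G[v]$. Since $N_G[v]$ is a clique, any independent set contains at most one vertex of it. An independent set of $G'$ together with $v$ is independent in $G$. Together these give $\alpha(G)=1+\alpha(G')$, hence
$$\tau(G)=(k-1)+\tau(G'),$$
and $G'$ is again chordal. The aim is to show that $\reg(S/NI(G))$ obeys the same recursion.

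\textbf{Upper bound.} We use Lemma~\ref{reg}(i) and split along the variables of the clique $N_G[v]$. Write $I=NI(G)$. Every vertex $u\in N_G[v]$ satisfies $N_G[v]\subseteq N_G[u]$, so $u_{v,G}=\prod_{j\in N_G[v]}x_j$ divides every $u_{u,G}$ with $u\in N_G[v]$. We apply Lemma~\ref{reg}(i) successively to $x_{j_1},\dots,x_{j_{k-1}}$, where $N_G(v)=\{j_1,\dots,j_{k-1}\}$, that is, to all clique variables except $x_v$. This expresses $\reg(S/I)$ as bounded by a maximum over ideals of the form $((I:x_A),x_B)$ with $A\sqcup B\subseteq N_G(v)$, each term carrying a shift of $|A|$.

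In each branch with $B\neq\emptyset$, setting $x_b=0$ for $b\in B$ kills $u_{v,G}$ and all generators of vertices adjacent to $b$. What remains should be (up to extra variables, which do not change regularity) the closed neighborhood ideal of an induced subgraph of $G$, or a colon of such an ideal. Such a subgraph is chordal, so induction applies. We then have to check combinatorially that its $\tau$, plus $|A|$, is at most $\tau(G)$; this uses the fact that deleting vertices does not increase $\tau$.

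In the branch $A=N_G(v)$, $B=\emptyset$, the colon produces $x_v$ as a generator. The remaining ideal is contained in the closed neighborhood ideal of $G'$ after colons by variables, and $\reg$ of a colon by a variable does not exceed that of the ideal. We therefore get the bound $(k-1)+\tau(G')=\tau(G)$.

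The main obstacle is the bookkeeping: checking that each intermediate ideal is, after colons and killing variables, genuinely controlled by $NI$ of a chordal graph whose $\tau$ plus the accumulated shift is at most $\tau(G)$. If this becomes unwieldy, I would instead first prove a slightly more general statement, by the same induction, for ideals of the form $(NI(H):x_C)$ with $H$ chordal, since this class is closed under the operations above.

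\textbf{Lower bound.} Here the plan is to exhibit a monomial $m$ and a set $W$ of variables such that
$$J=((NI(G):m),\,x_w: w\in W)$$
equals the ideal generated by $u_{v,G}$, a product of $k$ variables, together with a copy of $NI(G')$ in disjoint variables. Concretely, take $m=\prod_{u}x_u$ over the vertices $u\notin N_G[v]$ that are adjacent to $N_G(v)$; this trims the generators of these boundary vertices. Then kill the variables in $W$ that would otherwise merge the two pieces.

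Since the two pieces of $J$ live in disjoint sets of variables, regularity is additive for $S/J$, giving
$$\reg(S/J)=(k-1)+\reg(S/NI(G'))=(k-1)+\tau(G')=\tau(G)$$
by induction. Finally, $\reg$ does not increase under colon by a monomial or under restriction to a subset of variables for squarefree monomial ideals (Hochster's formula, together with Lemma~\ref{reg}(ii)). Hence $\reg(S/NI(G))\ge\reg(S/J)=\tau(G)$.

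The delicate point is choosing $m$ and $W$ so that the boundary vertices' generators neither become redundant with respect to $u_{v,G}$ nor alter $NI(G')$. If this fails, I would fall back on Hochster's formula directly: I would construct an explicit subset $U$ of variables and a nonvanishing homology class of the restriction of the dominance complex to $U$ in the appropriate degree, built as the join of a sphere coming from the clique $N_G[v]$ with a class for $G'$ obtained by induction.
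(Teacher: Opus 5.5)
Your upper bound is sound, though it is organized differently from the paper's. Each leaf of your binary expansion is exactly $((NI(G):x_A),x_B)=(NI(G\setminus A),x_B)$. Indeed, for $u\in A$ the generator $(u_{u,G}:x_A)$ is divisible by $u_{v,G\setminus A}$, and for $y\notin A$ one gets $u_{y,G\setminus A}$. Hence the leaf's regularity is at most $\tau(G\setminus A)=|B|+\tau(G')$, by your recursion applied to $v$ in $G\setminus A$, and adding the shift $|A|$ gives exactly $\tau(G)$. The fact you cite, that deleting vertices does not increase $\tau$, would not suffice here, since you need $\tau(G\setminus A)+|A|\le\tau(G)$. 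The paper avoids the full expansion by noting $(NI(G):x_i)=NI(G\setminus x_i)$ with $\tau(G\setminus x_i)=\tau(G)-1$, so every colon branch closes immediately by induction.

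The lower bound has a genuine gap. Your choice of $m$ already fails for the path $P_5$ with vertices $a,b,c,d,e$ and $v=a$. Here $NI(G)=(x_ax_b,\,x_bx_cx_d,\,x_dx_e)$ and $m=x_c$, so $NI(G):x_c=(x_ax_b,\,x_bx_d,\,x_dx_e)$. This is the edge ideal of a path on four vertices, whose quotient has regularity $1<2=\tau(P_5)$, and killing variables can only lower this.

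No other choice of $m$ and $W$ reaches your target either. Keeping $x_ax_b$ intact forces $x_a,x_b\nmid m$ and $a,b\notin W$. So the image of $x_bx_cx_d$, if it survives, still involves $x_b$. The only possible generator of degree at least two avoiding $x_a,x_b$ is then the image of $x_dx_e$, while $NI(P_3)$ needs two.

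The structural reason is this. Detaching a boundary generator from the clique means removing its $N_G(v)$-variables. A colon by those variables also strips them from $u_{v,G}$, and killing them kills $u_{v,G}$. So monotonicity of $\reg$ under colons and restrictions cannot preserve the shift $k-1$.

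Your Hochster fallback runs into the same coupling. For a boundary vertex $y$, the condition $N_G[y]\not\subseteq F$ is weaker than $N_{G'}[y]\not\subseteq F$. Hence the restricted dominance complex strictly contains the join; in $P_5$, $\{a,c,d\}$ is an extra face. You would still have to prove that your homology class survives in the larger complex.

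The missing idea is that $x_v$ divides only $u_{v,G}$. Put $L=(u_{y,G}:\,y\notin N_G[v])$. Then $NI(G)=(u_{v,G})+L$, and $(L:u_{v,G})=NI(G')$ exactly. The paper obtains $\reg(S/NI(G))\ge\reg(S/NI(G'))+k-1$ from minimality of the mapping cone of $S/(L:u_{v,G})(-k)\to S/L$.

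A direct argument also works. The short exact sequence $0\to S/(L:u_{v,G})(-k)\to S/L\to S/NI(G)\to 0$ gives $\reg(S/NI(G'))+k\le\max\{\reg(S/L),\reg(S/NI(G))+1\}$. Since $L$ does not involve $x_v$, Lemma~\ref{reg}(ii) gives $\reg(S/L)=\reg(S/(NI(G),x_v))\le\reg(S/NI(G))$. In either case $\reg(S/NI(G))\ge\tau(G')+k-1=\tau(G)$.
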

 	
 	\begin{proof}
  We prove
 	the statement by induction on $n=|V(G)|$. Let  $z$ be a simplicial vertex of $G$, and let  $N_G(z)=\{x_1,\ldots,x_r\}$. Then $u_{z,G}=zx_1\cdots x_r$. Since $N_G[z]\subseteq N_G[x_i]$ for all $i$, we have $u_{z,G}| u_{x_i,G}$ for all $i$. Hence,
 	$$NI(G)=(u_{z,G})+(u_{y,G}:\, y\in V(G)\setminus\{z,x_1,\ldots,x_r\}).$$
 	By Lemma~\ref{reg}(i), we have 
\begin{equation}\label{x_1}
	 \reg(S/NI(G))\leq \max\{\reg(S/(NI(G):x_1))+1,\reg(S/(NI(G),x_1))\}.
\end{equation} 	
 Set $G_i=G\setminus x_i$ for any $1\leq i\leq r$.
 	Notice that $(u_{z,G}:x_i)=zx_1\cdots x_{i-1}x_{i+1}\cdots  x_r=u_{z,G_i}$. Moreover, for any vertex $y\in V(G)\setminus\{z,x_1,\ldots,x_r\}$,  if  $y\in N_G(x_i)$, then we have $(u_{y,G}:x_i)=u_{y,G}/x_i=u_{y,G_i}$, and if $y\notin N_G(x_i)$, then $(u_{y,G}:x_i)=u_{y,G}=u_{y,G_i}$. 
 	Therefore, $$(NI(G):x_i)=(u_{z,G_i})+(u_{y,G_i}:\, y\in V(G_i)\setminus\{z,x_1,\ldots,x_r\})=NI(G_i).$$  
 	By induction hypothesis, $\reg(S/(NI(G):x_i))=\reg(S/(NI(G_i)))=\tau(G_i)$.
 	 We show that $\tau(G_i)+1=\tau(G)$ for any $1\leq i\leq r$. Let $C$ be a minimal vertex cover of $G_i$ with $\tau(G_i)=|C|$. Then $C\cup\{x_i\}$ is a vertex cover of $G$. This means that $\tau(G)\leq |C|+1=\tau(G_i)+1$.
 	 Now, let $D$ be a minimal vertex cover of $G$ with $\tau(G)=|D|$. If $x_i\in D$, then $D\setminus \{x_i\}$ is a vertex cover of $G_i$. Thus we obtain $\tau(G_i)\leq |D|-1=\tau(G)-1$. If $x_i\notin D$, then $$\{z,x_1,\ldots,x_{i-1},x_{i+1},\ldots,x_r\}\subseteq N_G(x_i)\subseteq D.$$ So $D'=(D\setminus\{z\})\cup \{x_i\}$ is a vertex cover of $G$ with $x_i\in D'$ and $|D'|=|D|=\tau(G)$. Thus the same argument as in the case $x_i\in D$ implies that $\tau(G_i)+1\leq\tau(G)$.  Hence, $\tau(G_i)+1=\tau(G)$ for all $i$, as claimed. This together with (\ref{x_1}) and the equality $\reg(S/(NI(G):x_1))=\tau(G_1)$ gives 
 	 \begin{equation}\label{eq:1}
 	 	\reg(S/NI(G))\leq \max\{\tau(G),\reg(S/(NI(G),x_1))\}.
 	 \end{equation} 	
 	  
 \smallskip
 
Next, we show that $\reg(S/(NI(G),x_1))\leq \tau(G)$. For any $1\leq i\leq r$, we set $J_i=(NI(G),x_1,\ldots,x_i)$. Applying Lemma \ref{reg}(i) to $J_i$ we have  
 \begin{equation}\label{J_i}
 	\reg(S/J_i)\leq \max\{\reg(S/(J_i:x_{i+1}))+1,\reg(S/J_{i+1})\}.
 \end{equation}

For each $1\leq i\leq r-1$, we have $$(J_i:x_{i+1})=((NI(G):x_{i+1}),x_1,\ldots,x_i)=(NI(G_{i+1}),x_1,\ldots,x_i),$$ and hence by Lemma \ref{reg}(ii), $$\reg(S/(J_i:x_{i+1}))=\reg(S/(NI(G_{i+1}),x_1,\ldots,x_i))\leq \reg(S/NI(G_{i+1}))=\tau(G_{i+1}),$$
Using this, the equality $\tau(G_{i+1})+1=\tau(G)$ and the inequalities (\ref{J_i}), we obtain
 \begin{equation}\label{inductIneq}
\reg(S/J_i)\leq \max\{\tau(G),\reg(S/J_{i+1})\}
 \end{equation} 	
for any $1\leq i\leq r-1$. 
Notice that $J_r=(NI(G),x_1,\ldots,x_r)=(NI(G\setminus  z),x_1,\ldots,x_r)$. 
So by Lemma \ref{reg}(ii) and induction hypothesis, 
$$\reg(S/J_r)\leq \reg(S/(NI(G\setminus z))=\tau(G\setminus  z))\leq \tau(G).$$ Applying this and multiple using of inequalities (\ref{inductIneq}), we conclude that $\reg(S/J_1)\leq \tau(G)$. Combining this with (\ref{eq:1}) we get $\reg(S/NI(G))\leq \tau(G)$.  

To complete the proof, we need to show that $\reg(S/NI(G))\geq \tau(G)$. Set $$L=(u_{y,G}:\, y\in V(G)\setminus\{z,x_1,\ldots,x_r\}).$$ We have  $NI(G)=(u_{z,G})+L.$ Since $z$ does not divide $u_{y,G}$ for any $y\in V(G)\setminus\{z,x_1,\ldots,x_r\}$, it follows from \cite[Corollary 2.6]{S} that the minimal free resolution of $NI(G)$ is obtained by the mapping cone of a complex homomorphism which is a lifting of the map $S/(L:u_{z,G})\rightarrow S/L$. So by \cite[Theorem 2.4]{S}(b), we have $$\reg(S/NI(G))=\max\{\reg(S/L),\reg(S/(L:u_{z,G}))+r\}.$$    
Hence, $\reg(S/NI(G))\geq \reg(S/(L:u_{z,G}))+r$.
It is easy to see that $(L:u_{z,G})=NI(G\setminus N_G[z])$.
Thus by induction hypothesis, $\reg(S/(L:u_{z,G}))=\tau(G\setminus N_G[z])$. Let $C$ be a minimal vertex cover of $G\setminus N_G[z]$ with $\tau(G\setminus N_G[z])=|C|$. Then clearly $C\cup\{x_1,\ldots,x_r\}$ is a vertex cover of of $G$. So $\tau(G)\leq |C|+r=\tau(G\setminus N_G[z])+r$. Combining the above inequalities we have $$\reg(S/NI(G))\geq \reg(S/(L:u_{z,G}))+r=\tau(G\setminus N_G[z])+r\geq \tau(G).$$
The proof is complete.
\end{proof}

The following example shows that Theorem \ref{chordalReg} does not hold, if we drop the assumption that $G$ is chordal. Moreover, it shows that in general $\tau(G)$ is neither an upper bound nor a lower bound for $\reg(S/NI(G))$. However, we will see in Proposition \ref{ltau} that for the family of bipartite graphs and very well-covered graphs, $\tau(G)$ is a lower bound for the regularity of $S/NI(G)$.

\begin{Example}\label{lu}
{\em (i)} Let $G=K_{m,n}$ be a complete bipartite graph with $m, n\geq 2$. Then by \cite[Theorem 2.10(ix)]{SM}, we have $$\reg(S/NI(G))=|V(G)|-2=m+n-2,$$ while $\tau(G)=\min\{m,n\}$. So 
$\reg(S/NI(G))=\tau(G)$ if and only if $m=n=2$. Therefore,
$\reg(S/NI(G))>\tau(G)$ for $(m,n)\neq (2,2)$. 

\medskip

{\em (ii)} Let $G$ be the  five cycle $C_5$. Computation  by Macaulay2 shows that $\reg(S/NI(G))=2$, while $\tau(G)=3$. Hence,
$\reg(S/NI(G))<\tau(G)$. 
\end{Example}

 \begin{Proposition}\label{ltau}
If $G$ is a bipartite graph or a very well-covered graph, then we have $\reg(S/NI(G))\geq \tau(G)$.
 \end{Proposition}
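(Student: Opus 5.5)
The plan is to reduce the statement to the known lower bound $\reg(S/NI(G))\geq a(G)$, valid for every graph. This bound was proved by Chakraborty et al.\ in \cite{CJRS} and is recalled in the introduction. Once we know $\tau(G)=a(G)$ for the graphs in question, the proposition follows immediately. So the whole argument consists of checking that bipartite graphs and very well-covered graphs are König graphs.

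For a bipartite graph $G$, König's theorem (recalled in Section 2) gives $\tau(G)=a(G)$. Combining this with \cite{CJRS} yields
$$\reg(S/NI(G))\geq a(G)=\tau(G).$$

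For a very well-covered graph $G$ on $2n$ vertices, Section 2 records that $G$ has a perfect matching of size $n$, so $a(G)\geq n$. Every minimal vertex cover has size $n$, so $\tau(G)=n$. Since $a(G)\leq\tau(G)$ always holds, we get $a(G)=\tau(G)=n$. The bound from \cite{CJRS} then again gives $\reg(S/NI(G))\geq \tau(G)$.

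The only point that needs care is the exact form of the \cite{CJRS} inequality: it must hold for arbitrary graphs, not only trees, and it must be stated for $S/NI(G)$ rather than for $NI(G)$. The introduction asserts precisely this. If one wanted a self-contained argument instead, I would take a maximum matching $M$, restrict to the induced subgraph on the vertices of $M$, and use restriction lemmas for Betti numbers to get a lower bound $|M|$. However, citing \cite{CJRS} is the intended route.
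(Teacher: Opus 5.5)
Your proof is correct and is the paper's argument: combine the general bound $\reg(S/NI(G))\geq a(G)$ from \cite[Theorem 1.2]{CJRS} with the fact that bipartite and very well-covered graphs are König graphs, so $a(G)=\tau(G)$. The closing ``self-contained'' sketch is unnecessary and would not work as stated, since restricting $NI(G)$ to $V(M)$ keeps only the generators $u_{i,G}$ with $N_G[i]\subseteq V(M)$ (for $K_3$ this gives the zero ideal), but this does not affect your main argument.
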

 
 \begin{proof}
 It was shown in \cite[Theorem 1.2]{CJRS} that $\reg(S/NI(G))\geq a(G)$ for any graph $G$. Since bipartite graphs and very well-covered graphs are König graphs, we have $a(G)=\tau(G)$. This shows the desired inequality.
 \end{proof}	
 	
 The following proposition gives sharp  bounds for the projective dimension and depth of $S/NI(G)$ for an arbitrary graph $G$. We next discuss some cases, where the inequalities are indeed equalities.
	
	\begin{Proposition}\label{bounds}
		For any graph $G$ we have 
		\begin{enumerate}
			\item [(a)]  $\pd(S/NI(G))\geq \gamma_0(G)\geq \alpha(G)$. 
			\item [(b)] $\depth(S/NI(G))\leq \tau(G)$.
		\end{enumerate}    
	\end{Proposition}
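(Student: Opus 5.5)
We first establish (a) and then deduce (b) from it via the Auslander--Buchsbaum formula.

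For the first inequality in (a), we use the standard fact that the projective dimension of $S/I$ is at least the height of every associated prime of $I$. Here $I=NI(G)$ is a squarefree monomial ideal, so its associated primes are exactly its minimal primes. By \cite[Lemma 2.2]{SM} these are $P_D=(x_i:\, i\in D)$, with $D$ running over the minimal dominating sets of $G$. One way to justify the inequality is to localize at $P_D$: since $P_D$ is minimal over $I$, the module $(S/I)_{P_D}$ has depth $0$ over the regular local ring $S_{P_D}$. By Auslander--Buchsbaum, $\pd_{S_{P_D}}(S/I)_{P_D}=\dim S_{P_D}=|D|$. Localization does not increase projective dimension, so $\pd(S/NI(G))\geq |D|$. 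Taking $D$ to be a minimal dominating set of maximum size gives $\pd(S/NI(G))\geq\gamma_0(G)$. Alternatively, one could invoke Hochster's formula, or the fact that $\mathrm{Ext}^{|D|}_S(S/I,S)_{P_D}\neq 0$.

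For the second inequality $\gamma_0(G)\geq\alpha(G)$, take a maximum independent set $A$. It is a maximal independent set, so by Remark \ref{covered} it is a minimal dominating set. Hence $\gamma_0(G)\geq |A|=\alpha(G)$.

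For (b), Auslander--Buchsbaum gives $\depth(S/NI(G))=n-\pd(S/NI(G))$. By (a) this is at most $n-\alpha(G)$, and $n-\alpha(G)=\tau(G)$ as noted in the preliminaries.

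The only substantive step is the bound $\pd\geq$ height of an associated prime. This is standard, and the localization argument above handles it cleanly; the rest is bookkeeping.
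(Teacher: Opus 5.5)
Your proof is correct and follows the paper's argument. Part (a) uses the lower bound for projective dimension coming from minimal dominating sets, together with the fact that a maximum independent set is a minimal dominating set; part (b) uses Auslander--Buchsbaum with $n-\alpha(G)=\tau(G)$. The only difference is that the paper simply cites \cite[Corollary 2.4]{SM} for $\pd(S/NI(G))\geq\gamma_0(G)$, whereas you prove it yourself by localizing at $P_D$ and applying Auslander--Buchsbaum (the standard big-height bound), and that argument is valid.
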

	
	\begin{proof}
(a) The inequality  $\pd(S/NI(G))\geq \gamma_0(G)$ holds by \cite[Corollary 2.4]{SM}. Since any maximal independent set of $G$ is a minimal dominating set of $G$, it follows that $\gamma_0(G)\geq \alpha(G)$.   

\medskip
	
(b) follows from the Auslander-Buchsbaum formula and the equality $n-\alpha(G)=\tau(G)$.
	\end{proof}
	
Theorem \ref{ISGen} and Proposition \ref{lPd} present families of graphs for which the inequalities of Proposition \ref{bounds} become equalities.	
We say $NI(G) $ has an {\em independent set of generators}, if there exists an independent set $A$ of $G$ such that $NI(G)=(u_{i,G}:\, i\in A)$. We denote the minimal set of monomial generators of a monomial ideal $I$ by $\mathcal{G}(I)$. 

\begin{Theorem}\label{ISGen}
	Let $G$ be a graph such that $NI(G)$ has an independent set of generators. Then
	\begin{enumerate}
		\item [(a)] $\pd(S/NI(G))=\alpha(G)=|\mathcal{G}(NI(G))|$.
		\item [(b)] $\reg(S/(NI(G))=\tau(G)=n-|\mathcal{G}(NI(G))|$.
	\end{enumerate}
\end{Theorem}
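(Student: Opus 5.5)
Let $A$ be an independent set of $G$ with $NI(G)=(u_{i,G}:\, i\in A)$. The plan is to show that the Taylor resolution of $NI(G)$ on these generators is minimal, and then to read off both invariants from it.

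First I record two combinatorial facts. (1) $A$ is a dominating set, hence a maximal independent set. Indeed, for every vertex $v$ the monomial $u_{v,G}$ lies in $NI(G)$, so some $u_{i,G}$ with $i\in A$ divides it. Then $N_G[i]\subseteq N_G[v]$, so $i\in N_G[v]$. In particular, $\bigcup_{i\in A}N_G[i]=V(G)$. (2) For $i\in A$, the variable $x_i$ divides $u_{j,G}$ with $j\in A$ only when $j=i$, because $A$ is independent. Thus each generator $u_{i,G}$, $i\in A$, has a private variable $x_i$. Consequently no generator divides another, so $\mathcal{G}(NI(G))=\{u_{i,G}: i\in A\}$ and $|\mathcal{G}(NI(G))|=|A|$.

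Next, consider the Taylor resolution on the generators $u_{i,G}$, $i\in A$. For $\sigma\subseteq A$, write $m_\sigma=\mathrm{lcm}(u_{i,G}: i\in\sigma)$. If $\tau\subsetneq\sigma$ and $i\in\sigma\setminus\tau$, then $x_i\mid m_\sigma$ but $x_i\nmid m_\tau$ by (2). Hence $m_\tau\neq m_\sigma$ for every proper subset $\tau$ of $\sigma$. This is the standard criterion for the Taylor resolution to be minimal. Therefore $\beta_{k}(S/NI(G))=\binom{|A|}{k}$, and $\pd(S/NI(G))=|A|$.

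For the regularity I use $\reg(S/NI(G))=\max_{\emptyset\neq\sigma\subseteq A}(\deg m_\sigma-|\sigma|)$. Adding an element $i$ to $\sigma$ multiplies $m_\sigma$ by at least the new variable $x_i$, so $\deg m_\sigma-|\sigma|$ is nondecreasing in $\sigma$. The maximum is therefore attained at $\sigma=A$. By (1), $m_A=\prod_{j\in V(G)}x_j$ has degree $n$, so $\reg(S/NI(G))=n-|A|$.

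Finally, I identify $|A|$ with $\alpha(G)$. Since $A$ is independent, $|A|\le\alpha(G)$. By Proposition \ref{bounds}(a), $\alpha(G)\le\pd(S/NI(G))=|A|$. Hence $|A|=\alpha(G)$, which proves (a). Then $\reg(S/NI(G))=n-\alpha(G)=\tau(G)=n-|\mathcal{G}(NI(G))|$, which proves (b).

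The only step requiring real care is the minimality of the Taylor resolution. I expect the private-variable observation (2) to handle it cleanly, since the lcm criterion is standard. The example $NI(P_3)=(x_ax_w,x_bx_w)$ shows that the generators need not have disjoint supports. So one cannot simply argue via a tensor product of resolutions, and the Taylor-resolution argument is what carries the proof.
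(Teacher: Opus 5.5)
Your proof is correct, but it takes a different route from the paper.

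\textbf{The paper's route.} For (a), the paper cites Leaman's result that $V(G)\setminus A$ is a redundant set of vertices, which gives $\depth(S/NI(G))\geq n-|A|$, and squeezes this against Proposition \ref{bounds}. For (b), it inducts on the number of vertices. The upper bound comes from Lemma \ref{reg}, applied to a vertex $x\notin A$ with $(NI(G):x)=NI(G\setminus x)$. The lower bound comes from the mapping-cone formula of \cite{S}, applied to $L=(u_{z_2,G},\ldots,u_{z_r,G})$ and $u_{z_1,G}$.

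\textbf{Your route.} You observe that each generator $u_{i,G}$, $i\in A$, carries the private variable $x_i$. Hence the Taylor resolution is minimal, and both invariants follow at once. The top Taylor module gives $\pd(S/NI(G))=|A|$. Monotonicity of $\deg m_\sigma-|\sigma|$, together with $\bigcup_{i\in A}N_G[i]=V(G)$, gives $\reg(S/NI(G))=n-|A|$.

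\textbf{Comparison.} Your argument is shorter and treats (a) and (b) uniformly. It needs neither the external depth bound nor the induction. It also yields more: every graded Betti number $\beta_{k,j}(S/NI(G))$ equals the number of $\sigma\subseteq A$ with $|\sigma|=k$ and $\deg m_\sigma=j$.

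The paper's step that the iterated mapping cone is minimal for any order of generators is really this same private-variable phenomenon, used indirectly. Your version makes the underlying reason explicit.

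Your only remaining outside input is Proposition \ref{bounds}(a), used to get $|A|=\alpha(G)$. Even that can be made purely combinatorial. For an independent set $B$, send each $b\in B$ to some $i\in A$ with $N_G[i]\subseteq N_G[b]$. If $b\neq b'$ had the same image $i$, then $b\in N_G[i]\subseteq N_G[b']$, contradicting independence of $B$. So $\alpha(G)\leq|A|$.
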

\begin{proof}
	(a)
	Let  $A$ be an independent set of $G$ and $NI(G)=(u_{x_i,G}:\, x_i\in A)$. Then $V(G)\setminus A$ is a redundant set of vertices in G, and by \cite[Corollary 5.5.18]{L}, $\depth(S/NI(G))\geq n-|A|$. Using this and Proposition \ref{bounds}, we have $$\alpha(G)\leq \pd(S/NI(G)\leq |A|\leq \alpha(G).$$ So the conclusion follows.
	
	(b) By induction on the number of vertices of $G$ we prove the assertion.  Let $A=\{z_1,\ldots,z_r\}$ be as (a). If $n=|A|$, then $G$ has no edge and the result is clear. So, assume that $x\in V(G)\setminus A$ and $u_{z_1,G}|u_{x,G}$.  Set $H=G\setminus x$. Then one can easily see that 
	$$(NI(G):x)=NI(H)=(u_{{z_1},H},\ldots,u_{z_r,H}).$$
	So, $NI(H)$ has an independent set of generators, and by the induction hypothesis,
	$$
	\reg\bigl(S/NI(H)\bigr)=\tau(H)=n-1-\lvert \mathcal{G}(NI(H))\rvert
	= n-1-\lvert \mathcal{G}(NI(G))\rvert .
	$$
	
	Suppose that $1\le \ell \le r$, $x\in \supp(u_{z_j})$ for all $j\le \ell$, and
	$x\notin \supp(u_{z_j})$ for all $j>\ell$. Then
	$$
	NI(G)+(x)
	= (u_{z_{\ell+1},G},\ldots,u_{z_r,G},x)
	= (u_{z_{\ell+1},H},\ldots,u_{z_r,H},x).
	$$
	
	So, $\reg(S/NI(G)+(x))=\reg(S/(u_{z_{\ell+1},H},\ldots,u_{z_r,H}))\leq \reg\bigl(S/NI(H)\bigr)$. The last inequality follows from the fact that the minimal free resolution of $S/NI(H)$ is computed by iteration mapping cone for any order of generators, see \cite[Corollary 2.6]{S}.  Now, by Lemma \ref{reg}, $\reg\bigl(S/NI(G)\bigr)\leq n-\lvert \mathcal{G}(NI(G))\rvert=\tau(G)$.
	
	To show the other side of the inequality, let $L=(u_{{z_2},G},\ldots,u_{z_r,G})$. Then  $$(L:u_{z_1,G})=NI(G\setminus N_G[z_1])=(u_{{z_2},G\setminus N_G[z_1]},\ldots,u_{z_r,G\setminus N_G[z_1]}).$$ Since $NI(G\setminus N_G[z_1])$ has an independent set of generators, by induction hypothesis, $\reg(S/NI(G\setminus N_G[z_1])=\tau(G\setminus N_G[z_1]))=n-\deg_G(z_1)-1-(r-1)=n-\deg_G(z_1)-r$.  On the other hand, by \cite[Theorem 2.4]{S} $$\reg(S/NI(G))=\max\{\reg(S/L),\reg(S/(L:u_{z_1,G}))+\deg_G(z_1)\}.$$
	So $\reg(S/NI(G)\geq \reg(S/(L:u_{z_1,G}))+\deg_G(z_1)  =n-\deg_G(z_1)-r+\deg_G(z_1)=n-r=\tau(G) $.
\end{proof}	

The class of graphs given in Theorem \ref{ISGen}, includes the family of chordal graphs, bipartite graphs and very well-covered graphs other than $C_4$, for which $S/NI(G)$ is Cohen-Macaulay, see Theorem  \ref{chordalCM}, Theorem \ref{bipartiteCM}, and Theorem \ref{VW-CM}, respectively. However, this class is bigger and not any graph in this class has a Cohen-Macaulay ring $S/NI(G)$. In the following we give   examples.

\begin{Example}
{\em (i)} Let $G$ be the following bipartite graph. Then  the set $$A=\{5,6,7,8,9,10,11,12\}$$ is an independent set of $G$ which produces an independent set of generators for $NI(G)$. However, by Theorem \ref{bipartiteCM}, the ring $S/NI(G)$ is not Cohen-Macaulay,  since $G$ it is not a whisker graph.  
	
	\begin{center}
		\begin{tikzpicture}[scale=0.8, transform shape,
			vertex/.style={
				circle,
				fill=black,
				inner sep=0pt,
				minimum size=4pt
			}
			]
			
			\node[vertex] (v1) at (0,3) {};
			\node[vertex] (v2) at (3,3) {};
			\node[vertex] (v3) at (3,0) {};
			\node[vertex] (v4) at (0,0) {};
			
			\node[vertex] (v5) at (1.5,3) {};
			\node[vertex] (v6) at (3,1.5) {};
			\node[vertex] (v7) at (1.5,0) {};
			\node[vertex] (v8) at (0,1.5) {};
			
			\node[vertex] (v9)  at (-1,4)   {};
			\node[vertex] (v10) at (4,4)    {};
			\node[vertex] (v11) at (4,-1)   {};
			\node[vertex] (v12) at (-1,-1)  {};
			
			\node[above  =2pt of v1] {$1$};
			\node[above  =2pt of v2] {$2$};
			\node[below  =2pt of v3] {$3$};
			\node[below =2pt of v4] {$4$};
			
			\node[above=2pt of v5] {$5$};
			\node[right=2pt of v6] {$6$};
			\node[below=2pt of v7] {$7$};
			\node[left=2pt of v8] {$8$};
			
			\node[above left=2pt of v9] {$9$};
			\node[above right=2pt of v10] {$10$};
			\node[below right=2pt of v11] {$11$};
			\node[below left=2pt of v12] {$12$};
			
			\draw (v1) -- (v5) -- (v2);
			\draw (v2) -- (v6) -- (v3);
			\draw (v3) -- (v7) -- (v4);
			\draw (v4) -- (v8) -- (v1);
			
			\draw (v1) -- (v9);
			\draw (v2) -- (v10);
			\draw (v3) -- (v11);
			\draw (v4) -- (v12);
			
		\end{tikzpicture}
	\end{center}
	More generally, if $G$ is a bipartite graph with the bipartition $X\sqcup Y$ and $G'$ is obtained from $G$, by adding a whisker to every vertex in $X$, then $NI(G)$ has an independent set of generators. 

{\em (ii)} A windmill graph is a graph obtained by joining some complete graphs at a single shared vertex. For any windmill graph $G$, the ideal $NI(G)$ has an independent set of generators, while $S/NI(G)$ is not Cohen-Macaulay.

\end{Example} 

\begin{Proposition}\label{lPd}
	Let $G$ be one of the following graphs:
	\begin{enumerate}
		\item [(a)] A complete multipartite graph.
		\item [(b)] A graph $G$ such that $S/NI(G)$ is Cohen-Macaulay.
	\end{enumerate}
	Then $\pd(S/NI(G))=\alpha(G)$ and $\depth(S/NI(G))=\tau(G)$. 
\end{Proposition}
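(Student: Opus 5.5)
Since $\depth(S/NI(G))=n-\pd(S/NI(G))$ by the Auslander--Buchsbaum formula and $n-\alpha(G)=\tau(G)$, the two equalities in the statement are equivalent. So in both cases it suffices to prove $\pd(S/NI(G))=\alpha(G)$. Proposition~\ref{bounds}(a) already gives $\pd(S/NI(G))\geq \gamma_0(G)\geq\alpha(G)$, so only the upper bound $\pd(S/NI(G))\leq \alpha(G)$ remains.

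\emph{Case (b).} If $S/NI(G)$ is Cohen--Macaulay, then $\pd(S/NI(G))=\height(NI(G))$. The minimal primes of $NI(G)$ are the $P_D$ with $D$ a minimal dominating set, so this height is $\gamma(G)$. Combining with Proposition~\ref{bounds}(a) and Remark~\ref{covered}, we get
$$\gamma(G)=\pd(S/NI(G))\geq\gamma_0(G)\geq\alpha(G)\geq\gamma(G).$$
All inequalities are therefore equalities, and in particular $\pd(S/NI(G))=\alpha(G)$.

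\emph{Case (a).} Let $G$ be complete multipartite with parts $V_1,\ldots,V_k$ of sizes $n_1,\ldots,n_k$, so that $\alpha(G)=\max_i n_i$.
\begin{itemize}
\item If $k=1$, then $G$ has no edges, $NI(G)=(x_v: v\in V(G))$, and $\pd(S/NI(G))=n=\alpha(G)$.
\item If all $n_i=1$, then $G$ is complete and $NI(G)=(x_V)$ with $x_V=\prod_{v\in V(G)}x_v$. Hence $\pd(S/NI(G))=1=\alpha(G)$.
\item Otherwise $k\geq 2$ and $\alpha(G)\geq 2$.
\end{itemize}

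In the last case, for $v\in V_i$ we have $N_G[v]=(V(G)\setminus V_i)\cup\{v\}$. Thus
$$NI(G)=I_1+\cdots+I_k,\qquad I_i=x_{V\setminus V_i}\,(x_v:\ v\in V_i).$$
Each $I_i$ is a shifted copy of a maximal ideal in $n_i$ variables, so $\pd(I_i)=n_i-1$.

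The key observation is that for $i\neq j$ the lcm of any generator of $I_i$ and any generator of $I_j$ equals $x_V$. Hence, with $J_m=I_1+\cdots+I_m$, the intersection $J_{m-1}\cap I_m$ is generated by such lcms and equals the principal ideal $(x_V)$, which has projective dimension $0$. The short exact sequence
$$0\to J_{m-1}\cap I_m\to J_{m-1}\oplus I_m\to J_m\to 0$$
gives $\pd(J_m)\leq\max\{\pd(J_{m-1}),\pd(I_m),1\}$. Inducting on $m$ yields $\pd(NI(G))\leq\max\{n_1-1,\ldots,n_k-1,1\}$, hence
$$\pd(S/NI(G))\leq\max\{\alpha(G),2\}=\alpha(G).$$

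The main point to verify carefully is the intersection computation. For monomial ideals, $J_{m-1}\cap I_m$ is generated by the lcms of pairs of generators, and every such lcm is exactly $x_V$ because the two vertices lie in different parts. This uses that the parts are nonempty, which holds by definition. The remaining steps are routine.
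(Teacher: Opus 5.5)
Your proof is correct. Part (b) is essentially the paper's argument: Auslander--Buchsbaum plus $\height(NI(G))=\gamma(G)$, squeezed between Proposition~\ref{bounds}(a) and Remark~\ref{covered}.

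For part (a) your route is genuinely different. The paper simply quotes \cite[Theorem 2.10(viii)]{SM}, which gives $\pd(S/NI(G))=n_r$, the largest part size, outright. You instead prove only the upper bound $\pd(S/NI(G))\leq\alpha(G)$ directly and take the reverse inequality from Proposition~\ref{bounds}(a). The steps of your argument all check out:
\begin{enumerate}
\item the decomposition $NI(G)=\sum_i x_{V\setminus V_i}(x_v:\ v\in V_i)$;
\item the observation that generators from different parts always have lcm $x_V$, so each $J_{m-1}\cap I_m=(x_V)$ is free;
\item the resulting estimate $\pd(J_m)\leq\max\{\pd(J_{m-1}),\pd(I_m),1\}$.
\end{enumerate}
You were also right to treat the complete graph separately. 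There the extra $1$ coming from the intersection would overshoot, giving the bound $2$ instead of the true value $1$.

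Your approach is self-contained and does not rely on the resolution computed in \cite{SM}. It also makes visible why the largest part controls the projective dimension: the parts interact only through the single monomial $x_V$. The paper's citation is shorter, but it leaves all the content of (a) to the earlier reference.
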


\begin{proof}
	(a). Let $G=K_{n_1,\ldots,n_r}$ be a complete multipartite graph with $n_1\leq \cdots\leq n_r$. Then it was shown in \cite[Theorem 2.10(viii)]{SM} that $\pd(S/NI(G))=n_r$. Clearly, $\alpha(G)=n_r$. So the equality holds.
	Moreover, we have $\depth(S/NI(G))=n-\pd(S/NI(G))=n-n_r=\tau(G)$.	 
	
	\medskip
	
	(b). Let $p=\pd(S/NI(G))$. By Proposition \ref{bounds} and our assumption, we have $$\alpha(G)\leq p=n-\depth(S/NI(G))=n-\dim(S/NI(G))=\gamma(G)\leq \alpha(G).$$ Hence, we obtain $p=\alpha(G)=\gamma(G)$.   		
\end{proof}

\begin{Remark}\label{C5}
	We note that the inequalities of Proposition \ref{bounds} may be strict in general.
	 Let $G=C_5$. Then $\pd(S/NI(G))=3>2=\gamma_0(G)=\alpha(G)$, and $\depth(S/NI(G))=2<3=\tau(G)$. 
\end{Remark}

For any tree $T$, the equality $\pd(S/NI(T))=\alpha(T)$ was proved in \cite[Theorem 3.14]{J}. We expect that this holds more generally for any chordal graph. Indeed, computations in Macaulay2 motivate the following question.

\noindent 
\begin{Question}
Let $G$ be a chordal graph or a bipartite graph.	Is it true that $\pd(S/NI(G))=\gamma_0(G)$, or more strongly that  $\pd(S/NI(G))=\alpha(G)$?
\end{Question}

 The following result describes the multiplicity of the ring $S/NI(G)$ in terms of dominating sets of $G$. We denote the number of minimal dominating sets of a graph $G$ by $d(G)$, and the multiplicity of a ring $R$ by $e(R)$.

\begin{Theorem}\label{multiplicity}
	For any graph $G$, we have 
	
	\begin{enumerate}
		\item [(a)] $e(S/NI(G))$ is equal to the number of minimal dominating sets of $G$ of cardinality $\gamma(G)$. In particular,
		$e(S/NI(G))\leq d(G)$ and equality holds if and only if $G$ is well-dominated.
		\item [(b)] If
		$V(G)$ is the disjoint union of  maximal cliques $C_1,\ldots,C_r$ of $G$ which admit a free vertex, then
		  $e(S/NI(G))=d(G)=\prod_{i=1}^r|C_i|$. In particular, this equality holds when  $G$ is a well-covered chordal graph, or a whisker graph.
	\end{enumerate}  
	
\end{Theorem}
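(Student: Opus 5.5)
For part (a) I will use the standard fact that for a squarefree monomial ideal $I\subset S$, the multiplicity $e(S/I)$ is the number of minimal primes of $I$ of height $\height(I)$. Equivalently, it is the number of facets of maximal dimension in the Stanley--Reisner complex, read off from the Hilbert series. By \cite[Lemma 2.2]{SM}, the minimal primes of $NI(G)$ are exactly the $P_D$ with $D$ a minimal dominating set, and $\height P_D=|D|$. Hence $\height NI(G)=\gamma(G)$, and $e(S/NI(G))$ counts the minimal dominating sets of size $\gamma(G)$. This number is at most the total number $d(G)$ of minimal dominating sets. Equality holds exactly when every minimal dominating set has size $\gamma(G)$, which is the definition of well-dominated. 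So (a) is essentially immediate.

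For (b), by (a) it suffices to show two things. First, every minimal dominating set $D$ meets each $C_i$ in exactly one vertex. Second, every transversal (one vertex from each $C_i$) is a minimal dominating set. The count is then $\prod_{i=1}^r|C_i|$, all these sets have size $r$, so $G$ is well-dominated and $e=d(G)$.

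For the first claim, let $f_i$ be a free vertex of $C_i$. Then $N_G[f_i]=C_i$, because $f_i$ lies in a unique maximal clique, so every neighbor of $f_i$ lies in $C_i$. Domination of $f_i$ forces $D\cap C_i\neq\emptyset$. Conversely, any set meeting every $C_i$ dominates $G$, since $C_i$ is a clique and the $C_i$ cover $V(G)$. So a minimal dominating set contains exactly one vertex of each $C_i$: if it contained two from some $C_i$, removing one would still leave a set meeting all cliques. For the second claim, a transversal $T$ dominates. Removing its vertex in $C_i$ leaves $T\cap C_i=\emptyset$, so $f_i$ is undominated, which proves minimality.

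The main obstacle is the ``in particular'' clause, where one must verify the hypothesis for the stated classes.

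\emph{Whisker graphs.} Each leaf together with its neighbor forms a maximal clique $\{v,\ell\}$ with free vertex $\ell$, and these cliques partition $V(G)$. One needs a short case check here: an isolated edge of two leaves is also such a clique.

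\emph{Well-covered chordal graphs.} I will invoke the known result of Prisner--Topp--Vestergaard that a chordal graph is well-covered if and only if every vertex belongs to exactly one simplicial clique. A simplicial clique is a maximal clique containing a simplicial vertex, and a simplicial vertex is precisely a free vertex. This gives the required partition into maximal cliques with free vertices. If citing that result is not acceptable, I would try to prove it directly. The route is to take a maximal independent set of simplicial vertices, one from each simplicial clique, and use well-coveredness together with a perfect elimination ordering to show that these cliques cover $V(G)$ disjointly. This is the step I expect to require the most care.
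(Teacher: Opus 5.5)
Your proof is correct and follows essentially the paper's route. Part (a) is the same count of minimal primes of height $\gamma(G)$ (equivalently, facets of maximal dimension of the Stanley--Reisner complex). In (b) the paper likewise identifies the minimal dominating sets with the transversals of $C_1,\ldots,C_r$, and for well-covered chordal graphs it uses \cite[Theorem 2.1]{HHZ1}, which is equivalent to the Prisner--Topp--Vestergaard result you cite.

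The small differences favor you. You actually prove the transversal description, which the paper only asserts, so well-dominatedness comes for free rather than through the unmixedness result \cite[Corollary 2.11]{S1}. Your direct check for whisker graphs is also genuinely needed, since the paper's pointer to Corollary~\ref{alphaGamma} only covers chordal whisker graphs.
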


\begin{proof}
	(a) Let $\Delta=\Delta_{NI(G)}$ be the Stanley-Reisner simplicial complex of the ideal $NI(G)$. 
	Then $F\in \Delta$ if and only if $N_G[x]\nsubseteq F$ for any $x\in V(G)$. This is equivalent to $F^c\cap N_G[x]\neq \emptyset$ for any $x\in V(G)$, where $F^c=V(G)\setminus F$. So $$\Delta=\langle F^c: F \textrm{ is a minimal dominating set of } G\rangle.$$

	By \cite[Corollary 6.2.3.]{HHBook},
	$e(S/NI(G))$ is equal to the number of facets $F^c$ of $\Delta$ with $|F^c|=\dim(\Delta)+1$. Notice that $\dim(\Delta)=\dim(S/NI(G))-1=n-\gamma(G)-1$. So  $e(S/NI(G))$ is the number of facets $F^c$ of $\Delta$ with $n-|F|=n-\gamma(G)$. This shows the first statement. The second statement follows easily from the first statement.
	
	\medskip
	
	(b) The assumption on $V(G)$ implies that $NI(G)$ is an unmixed ideal, see \cite[Corollary 2.11]{S1}.  Therefore $G$ is well-dominated. Thus by (a), we have $e(S/NI(G))=d(G)$. Let $C_1,\ldots,C_r$ be the maximal cliques of $G$ which admit a free vertex. Then any minimal dominating set of $G$ is of the form $\{x_{i_1},\ldots,x_{i_r}\}$, where $x_{i_j}\in C_j$ for all $j$. Moreover, for any such choices of $x_{i_j}$ we get a  minimal dominating set of $G$. So the number of minimal dominating sets of $G$ is $\prod_{i=1}^r|C_i|$. 
	If $G$ is a well-covered chordal graph,    
	or a whisker graph, then $V(G)$ has such a partition to disjoint union of maximal cliques admitting a free vertex, see Corollary \ref{alphaGamma}. 
\end{proof}

\section{Cohen-Macaulayness of very well–covered graphs}

In this section, we give a characterization of very well-covered graphs $G$ for which the ring $S/NI(G)$ is Cohen-Macaulay. 
In order to do this, we need some preparation. The following result which was proved in \cite{L} will be of use.

\begin{Proposition}\cite[Proposition 5.3.3]{L}\label{VWWD}
	Let $G$ be a connected graph on $2n$ vertices such that $\dim(S/NI(G))=n$. Then $G$ is
	well-dominated if and only if every spanning tree of $G$ is a whisker graph. 
\end{Proposition}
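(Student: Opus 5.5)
The plan is to show that, under the hypothesis $\dim(S/NI(G))=n$, \emph{both} conditions in the statement always hold, so the equivalence follows. Since $\dim(S/NI(G))=|V(G)|-\gamma(G)$, the hypothesis says exactly that $\gamma(G)=n=|V(G)|/2$. The main tool I intend to use is a classical domination result not stated in the paper, namely the theorem of Payan--Xuong and Fink--Jacobson--Kinch--Roberts: a connected graph $H$ on $2m$ vertices has $\gamma(H)\leq m$ (Ore's bound), and equality holds if and only if $H=C_4$ or $H$ is the corona $H'\circ K_1$ of a connected graph $H'$ on $m$ vertices. Here $H'\circ K_1$ means a pendant vertex attached to each vertex of $H'$; for $m=1$ this gives $K_2$.

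\emph{Every spanning tree is a whisker graph.} Let $T$ be a spanning tree of $G$. Since $E(T)\subseteq E(G)$, every dominating set of $T$ is a dominating set of $G$, so $\gamma(T)\geq \gamma(G)=n$. On the other hand $T$ is connected on $2n$ vertices, so $\gamma(T)\leq n$ by Ore's bound. Hence $\gamma(T)=n$. Because $T$ is a tree it is not $C_4$, so by the characterization $T=T'\circ K_1$ for some tree $T'$. In a corona each original vertex has a unique pendant neighbour and each added vertex has degree one, so $T$ is a whisker graph.

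\emph{$G$ is well-dominated.} Apply the characterization to $G$ itself, which is connected on $2n$ vertices with $\gamma(G)=n$.
\begin{itemize}
\item If $G=C_4$, the minimal dominating sets are exactly the $2$-subsets of $V(G)$, all of size $2$, so $G$ is well-dominated.
\item If $G=H\circ K_1$, the vertex set splits into pairs $\{v,v'\}$, where $v'$ is the pendant neighbour of $v$. Let $D$ be a minimal dominating set. Since $N_G[v']=\{v,v'\}$, the set $D$ meets every pair.
\item $D$ cannot contain both $v$ and $v'$. Indeed, $v'$ dominates only $v$ and $v'$, and both are already dominated by $v\in D$, so $D\setminus\{v'\}$ would still be dominating, contradicting minimality.
\item Hence $|D|=n$ for every minimal dominating set $D$, and $G$ is well-dominated.
\end{itemize}

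Both sides of the equivalence are therefore true, which proves the proposition. The main obstacle is the characterization of graphs attaining $\gamma=|V|/2$. If it should not be quoted, I would prove it by induction along a spanning tree. First show that a tree $T$ on $2m$ vertices with $\gamma(T)=m$ is a corona: take a leaf $\ell$ at maximum distance from a root, with parent $p$. The parent $p$ cannot have two leaf children, otherwise deleting $p$ and its leaves lowers the vertex count by at least $3$ while $\gamma$ drops by only $1$, contradicting $\gamma(T)=m$. Then delete $\{p,\ell\}$ and induct. Second, pass to a connected $G$: every spanning tree is a corona, and using edge swaps between spanning trees one shows that all spanning trees share the same pendant pairing unless $G=C_4$, which forces $G$ itself to be a corona.
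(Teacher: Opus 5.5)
Your proof is correct. The paper gives no proof of this proposition (it is quoted from Leaman's thesis), so there is nothing internal to match. Your route via Ore's bound and the Payan--Xuong / Fink--Jacobson--Kinch--Roberts characterization is legitimate. It shows that, because $\dim(S/NI(G))=2n-\gamma(G)$, the hypothesis $\gamma(G)=n$ already forces \emph{both} sides of the equivalence, so the ``if and only if'' is automatic.

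You can economize in two places.
\begin{enumerate}
\item The well-dominated half needs no characterization at all. The paper's own complement argument in (vi)$\Rightarrow$(ii) of Theorem~\ref{VW-CM} applies: since $G$ has no isolated vertices, $V(G)\setminus D$ is dominating for any minimal dominating set $D$. Hence $n\leq 2n-|D|$, which gives $|D|=n$ directly.
\item The spanning-tree half needs only the tree case of the characterization, and your leaf-at-maximal-depth induction essentially proves it. You should add the check that the parent of $p$ is a non-pendant vertex of the smaller corona. If it were a pendant $q'$ with partner $v$, and the smaller tree had at least $4$ vertices, then $p$ together with all non-pendant vertices other than $v$ would dominate $T$ with only $m-1$ vertices.
\end{enumerate}
With these, the vague edge-swap step of your fallback becomes unnecessary. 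That step is essentially what the paper carries out at length in (ii)$\Rightarrow$(iii) of Theorem~\ref{VW-CM}.

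Your approach also yields more than the proposition. A well-dominated very well-covered graph has $\gamma(G)=\alpha(G)=n$. Applying the classical characterization to $G$ itself then gives $G=C_4$ or $G=H\circ K_1$, i.e.\ a whisker graph, immediately. This would replace the paper's entire Kruskal/edge-swap argument in that implication.
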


The following lemma is an easy observation and a well-known fact on very well–covered graphs. For the reader's convenience we provide a short proof of it. 

\begin{Lemma}\label{part}
	Let $G$ be a very well–covered graph  with $2n$ vertices. Then 	there exists a partition of $V(G)=X\sqcup Y$, such that $X =\{x_1,\ldots,x_n\}$ is minimal vertex cover of $G$, $Y =\{y_1,\ldots,y_n\}$ is a maximal independent set of $G$ and $\{\{x_1,y_1\},\ldots,\{x_n,y_n\}\}\subseteq E(G)$.
\end{Lemma}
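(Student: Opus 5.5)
We start from the fact, recalled in the preliminaries, that a very well-covered graph $G$ on $2n$ vertices has a perfect matching $M=\{\{a_1,b_1\},\ldots,\{a_n,b_n\}\}$. The plan is to choose any maximal independent set $Y$ of $G$, let $X=V(G)\setminus Y$, and show that $Y$ picks exactly one endpoint from each edge of $M$. Then we label that endpoint $y_i$ and the other endpoint $x_i$.

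The key steps, in order, are as follows.
\begin{enumerate}
\item Since $G$ is very well-covered, every maximal independent set has size $n$. So $|Y|=n$, and hence $|X|=n$.
\item $X$ is a vertex cover, because $Y$ is independent. It is a minimal vertex cover, because its complement $Y$ is a maximal independent set; this uses the complementation correspondence recalled in Section 2.
\item Since $Y$ is independent, no edge of $M$ has both endpoints in $Y$. So $|Y\cap\{a_i,b_i\}|\le 1$ for each $i$.
\item The edges of $M$ partition $V(G)$, so $n=|Y|=\sum_{i=1}^n |Y\cap\{a_i,b_i\}|$. Each summand is at most $1$ by step 3, so each summand equals $1$.
\item For each $i$, let $y_i$ be the unique vertex of $Y\cap\{a_i,b_i\}$ and $x_i$ the other endpoint. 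Then $X=\{x_1,\ldots,x_n\}$, $Y=\{y_1,\ldots,y_n\}$, and $\{x_i,y_i\}\in M\subseteq E(G)$ for all $i$, as required.
\end{enumerate}

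The only real input is the existence of the perfect matching. If one prefers not to rely on that cited fact, a proof is needed. The natural route is Hall's theorem applied to the bipartite graph between $Y$ and $X$. Suppose some $A\subseteq Y$ had $|N_G(A)|<|A|$. Then $(Y\setminus A)\cup N_G(A)$ contains a maximal independent set of size at least... — this needs care. A cleaner argument is the following. Suppose $A\subseteq Y$ and $N_G(A)\cap X$ is small. Then the set $(X\setminus N_G(A))\cup A$ is a vertex cover of size less than $n$. This would contradict the fact that all minimal vertex covers have size $n$, since any vertex cover contains a minimal one.

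So Hall's condition holds for $Y$ into $X$, and we obtain a matching saturating $Y$. This matching is perfect because $|X|=|Y|$. The main obstacle is this step, if the matching fact is not assumed; otherwise the proof is the short counting argument above.
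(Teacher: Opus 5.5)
Your main argument (steps 1--5) is correct and is essentially the paper's proof. Both take the perfect matching from Favaron's theorem and finish by counting. The paper starts from a minimal vertex cover of size $n$, which meets every matching edge and so contains exactly one endpoint of each. You start from a maximal independent set of size $n$, which contains at most one endpoint of each matching edge and so exactly one.

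Your optional self-contained proof of the perfect matching, however, is wrong and should be dropped or replaced.
\begin{itemize}
\item If $|N_G(A)|<|A|$, the set $(X\setminus N_G(A))\cup A$ has $n-|N_G(A)|+|A|>n$ elements, not fewer than $n$.
\item It need not be a vertex cover at all: edges from $N_G(A)$ to $Y\setminus A$, and edges inside $N_G(A)$, are missed.
\item More fundamentally, the contradiction you aim for (a vertex cover with fewer than $n$ vertices) only uses $\tau(G)\ge n$, and that cannot be enough. The graph $P_3\sqcup K_3$ has $6$ vertices, no isolated vertices and $\tau=3$, but no perfect matching.
\end{itemize}
So well-coveredness has to enter in an essential way.

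One correct route is as follows.
\begin{enumerate}
\item Take $A\subseteq Y$ minimal with $|N_G(A)|<|A|$, and set $B=N_G(A)$. This set is nonempty because $G$ has no isolated vertices.
\item For every nonempty $T\subseteq B$, the set $A\setminus N_G(T)$ is a proper subset of $A$ with $N_G(A\setminus N_G(T))\subseteq B\setminus T$. Minimality then gives $|N_G(T)\cap A|\ge |T|+|A|-|B|\ge |T|+1$.
\item Let $W=V(G)\setminus(A\cup B)$. For every maximal independent set $J$ of $G[W]$, the set $J\cup A$ is a maximal independent set of $G$. Hence $\alpha(G[W])=n-|A|$.
\item Let $T$ be a maximal independent set of $G[B]$, and extend $T\cup(A\setminus N_G(T))$ to a maximal independent set $I$ of $G$. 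Only vertices of $W$ get added, so $|I|\le |T|+|A|-|N_G(T)\cap A|+n-|A|\le n-1$.
\end{enumerate}
This contradicts well-coveredness, so Hall's condition holds.
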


\begin{proof}
	It was shown in \cite[Theorem 1.2]{F} that any very well–covered graph $G$ has a perfect matching. Let $\{e_1,\ldots,e_n\}\subseteq E(G)$ be a perfect matching of $G$ with $e_i=\{x_i,y_i\}$ for all $i$. Let $I(G)$ be the edge ideal of $G$. Since $\height(I(G))=n$, it follows that $G$ has a minimal vertex cover $X$ of cardinality $n$. So $X\cap e_i\neq \emptyset$ for all $i$. Without loss of generality, we may assume that $x_i\in X$ for all $1\leq i\leq n$. Since $|X|=n$, we get $X=\{x_1,\ldots,x_n\}$. Thus $Y=X^c=\{y_1,\ldots,y_n\}$ is a maximal independent set of $G$.
\end{proof}

The following lemma shows that Proposition \ref{VWWD} can be applied to very well–covered graphs. 

\begin{Lemma}\label{dimVW}
	Let $G$ be a very well–covered graph with $2n$ vertices. Then we have $\dim(S/NI(G))\geq n$.  If in addition, $G$ is
	well-dominated, then $\dim(S/NI(G))=n$. 
\end{Lemma}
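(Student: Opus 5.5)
Since $\dim(S/NI(G))=2n-\gamma(G)$, the first assertion amounts to $\gamma(G)\leq n$, and the second to $\gamma(G)=n$ once $G$ is well-dominated. I will prove both through this reformulation.

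For the inequality, take the partition $V(G)=X\sqcup Y$ from Lemma \ref{part}. The set $Y$ is a maximal independent set of $G$, so by Remark \ref{covered} it is a minimal dominating set. Directly: any $x_i\in X$ is adjacent to $y_i\in Y$, and $Y$ trivially dominates itself. Hence $\gamma(G)\leq |Y|=n$. (Equally, $X$ is a dominating set of size $n$: $y_i$ is adjacent to $x_i$, and $X$ dominates itself.) Therefore $\dim(S/NI(G))=2n-\gamma(G)\geq n$.

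For the equality, suppose in addition that $G$ is well-dominated, so all minimal dominating sets have the same size, equal to $\gamma(G)$. The set $Y$ is a minimal dominating set with $|Y|=n$, since it is a maximal independent set (Remark \ref{covered}). Hence $\gamma(G)=|Y|=n$, and $\dim(S/NI(G))=2n-n=n$.

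I expect no real obstacle here. The only points to verify are that a maximal independent set is a minimal dominating set, which is Remark \ref{covered}, and the dimension formula $\dim(S/NI(G))=|V(G)|-\gamma(G)$ recalled in Section 2. Both are already available in the paper, so the argument reduces to citing them together with Lemma \ref{part}.
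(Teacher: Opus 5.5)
Your proof is correct and is essentially the paper's argument: a maximal independent set of size $n$ is a minimal dominating set, so $\gamma(G)\leq n$, and well-dominatedness forces $\gamma(G)=n$. The only difference is that you take such a set from Lemma \ref{part}, whereas the paper uses directly that every maximal independent set of a very well-covered graph has size $n$, so the perfect-matching partition is not needed.
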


\begin{proof}
	Since $G$ is very well–covered, any maximal independent set of $G$ has cardinality $n$. So it follows that $\height(NI(G))=\gamma(G)\leq \alpha(G)=n$ and hence the first inequality holds. If further $G$ is well-dominated , then  $\gamma(G)$ is equal to the cardinality of any maximal independent set of $G$. So we have $\dim(S/NI(G))=n$. 	
\end{proof}

In order to study the Cohen-Macaulayness of $S/NI(G)$, one may always reduce to the case that $G$ is a connected graph. In fact, let $G$ be a graph with the connected components $G_1,\ldots,G_r$. Then $NI(G)=\sum_{i=1}^r NI(G_i)$, where $NI(G_i)\subset S_i=K[V(G_i)]$ are ideals in disjoint polynomial rings. Then  $S/NI(G)=\bigotimes_{i=1}^rS_i/NI(G_i)$. Hence, $S/NI(G)$ is Cohen-Macaulay if and only if $S_i/NI(G_i)$ is Cohen-Macaulay for any $1\leq i\leq r$. Thus in Theorem \ref{VW-CM} we may assume that $G$ is connected.

Now, we are in a position to prove the main result of this section.

\begin{Theorem}\label{VW-CM}
	Let $G$ be a connected very well-covered graph with $2n\geq 4$ vertices. The following are equivalent.
	\begin{enumerate}
		\item [(i)] $S/NI(G)$ is Cohen-Macaulay. 
		
		\item [(ii)] $G$ is well-dominated.
		
		\item [(iii)] $G=C_4$, or $G$ is a whisker graph.  
		
		\item [(iv)] If $G\neq C_4$, then $NI(G)$ is a complete intersection.
		
		\item [(v)] If $G\neq C_4$, then $S/NI(G)$ is Gorenstein.
		
		\item [(vi)] $\gamma(G)=\alpha(G)=n$.
	\end{enumerate}	  
\end{Theorem}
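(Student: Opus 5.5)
We will prove the cycle (i)$\Rightarrow$(vi)$\Rightarrow$(ii)$\Rightarrow$(iii)$\Rightarrow$(iv)$\Rightarrow$(v)$\Rightarrow$(i), using the partition $V(G)=X\sqcup Y$ with perfect matching $\{x_i,y_i\}$ from Lemma~\ref{part}.

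\emph{(i)$\Rightarrow$(vi).} If $S/NI(G)$ is Cohen-Macaulay, the proof of Proposition~\ref{lPd}(b) gives $\gamma(G)=\alpha(G)$. Since $G$ is very well-covered, $\alpha(G)=n$.

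\emph{(vi)$\Rightarrow$(ii).} Let $D$ be a minimal dominating set. Since $\gamma(G)=n$, we have $|D|\geq n$, so we must show $|D|\leq n$. The idea is that each edge $\{x_i,y_i\}$ should contribute at most one vertex of $D$. We do not expect this to hold for arbitrary very well-covered graphs. The plan is to argue as follows. From $\gamma(G)=n$, every dominating set meets each closed neighborhood in a controlled way, and we hope to exploit this to show that each $y_i$ is forced to be a leaf when $G\neq C_4$. The route we would try first is to go through spanning trees. Every spanning tree $T$ containing the perfect matching satisfies $\gamma(T)\geq\gamma(G)=n$. A tree on $2n$ vertices with a perfect matching and $\gamma(T)=n$ is known to be a whisker graph (the Payan--Xuong / Fink et al.\ characterization). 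Hence every such tree is a whisker. Leaman's Proposition~\ref{VWWD}, together with Lemma~\ref{dimVW}, then yields well-domination. This step needs care: not every spanning tree contains the matching, so we would either show that arbitrary spanning trees also satisfy $\gamma(T)=n$, or go directly to (iii). We expect this step, obtaining the structural consequence from $\gamma(G)=n$, to be the main obstacle.

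\emph{(ii)$\Rightarrow$(iii).} Assume $G$ is well-dominated. By Lemma~\ref{dimVW}, $\dim S/NI(G)=n$, so Proposition~\ref{VWWD} applies and every spanning tree of $G$ is a whisker graph. Suppose $G$ is not a whisker. We will build a spanning tree that is not a whisker, unless $G=C_4$. Suppose some $y_i$ has degree at least $2$, say $y_i$ is adjacent to $x_j$ with $j\neq i$. Using connectivity, choose a spanning tree containing the matching edges and the edge $\{y_i,x_j\}$. If $x_i$ and $y_i$ then both have degree at least $2$ in the tree, the tree has a vertex with no degree-one neighbour. Otherwise we reroute edges to force this. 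Only in $C_4$ can every spanning tree (a path $P_4$) be a whisker while $G$ is not. We would check this by a small case analysis on $n=2$ and $n\geq3$.

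\emph{(iii)$\Rightarrow$(iv)$\Rightarrow$(v)$\Rightarrow$(i).} If $G$ is a whisker graph with leaves $y_i$, then $N_G[y_i]=\{x_i,y_i\}$. Moreover, each $u_{x_i,G}$ is divisible by $x_iy_i$. Hence $NI(G)=(x_1y_1,\ldots,x_ny_n)$, which is a complete intersection. Complete intersections are Gorenstein, and Gorenstein rings are Cohen-Macaulay. For $G=C_4$ we verify (i) directly: $NI(C_4)$ is generated by the four products of three variables, and by Theorem~\ref{multiplicity} or a direct computation it is Cohen-Macaulay of height $2$. Statements (iv) and (v) are vacuous for $C_4$. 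For the cycle, (i) for $C_4$ is needed, and (ii)/(vi) for $C_4$ are checked by hand.
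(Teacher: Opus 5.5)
Your cycle breaks at (vi)$\Rightarrow$(ii). You leave this step unresolved and call it the main obstacle, but it is elementary. The missing idea is Ore's observation: since $G$ has no isolated vertices, the complement of a minimal dominating set $D$ is again dominating. Indeed, if $N_G[z]\subseteq D$ for some $z$, then $D\setminus\{z\}$ still dominates, because $z$ has a neighbour in $D$ and the other vertices of $N_G[z]$ lie in $D$; this contradicts minimality. Hence $n=\gamma(G)\le |V(G)\setminus D|=2n-|D|$, so $|D|=n$ for every minimal dominating set. This is exactly the paper's argument. The same bound removes your worry about spanning trees that do not contain the matching: any spanning tree $T$ satisfies $n=\gamma(G)\le\gamma(T)\le |V(T)|/2=n$, since a dominating set of $T$ dominates $G$. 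Also, Lemma~\ref{dimVW} gives $\dim S/NI(G)=n$ only under well-domination, so citing it here is circular. Under (vi) you get $\dim S/NI(G)=2n-\gamma(G)=n$ directly.

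The step (ii)$\Rightarrow$(iii) is also only a plan. Its strategy is the paper's: use Proposition~\ref{VWWD} and exhibit a non-whisker spanning tree unless $G=C_4$. But the ``rerouting'' is the whole proof. Your starting point is also too weak. Failure to be a whisker graph gives an index $i$ with both $\deg_G(x_i)\ge 2$ and $\deg_G(y_i)\ge 2$ (otherwise every matching edge has a leaf endpoint and $G$ is a whisker graph), and you need both degrees, not just $\deg_G(y_i)\ge 2$. The paper takes such an index $i=1$ and uses Kruskal's algorithm to get a spanning tree $T$ containing all $\{x_k,y_k\}$ and an edge $\{y_1,x_t\}$. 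It shows that $x_1$ must then be a leaf of $T$ and picks an edge $\{x_1,z\}\in E(G)\setminus E(T)$. Successive edge exchanges $T',T_1,T_2$, each of which must again be a whisker tree, then force $T$ to be the path $x_1,y_1,w_1,z$ and $G=C_4$. A ``case analysis for $n\ge 3$'' cannot replace this.

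A shorter valid route is available to you. Apply the Payan--Xuong / Fink--Jacobson--Kinch--Roberts theorem you already quote to $G$ itself rather than to its spanning trees. A connected graph of order $2n\ge 4$ with $\gamma=n$ is $C_4$ or a corona $H\circ K_1$, which is a whisker graph; this gives (vi)$\Rightarrow$(iii) outright. Together with the trivial (ii)$\Rightarrow$(vi) and the Ore argument, this bypasses Proposition~\ref{VWWD} and the exchange argument entirely.

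Finally, Theorem~\ref{multiplicity} says nothing about Cohen--Macaulayness of $S/NI(C_4)$. Use Proposition~\ref{lPd}(a) with $C_4=K_{2,2}$, as the paper does. Alternatively, note that the Stanley--Reisner complex of $NI(C_4)$ is the $1$-skeleton of a $3$-simplex, which is connected and hence Cohen--Macaulay.
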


\begin{proof} 
	(i) $\Rightarrow$ (ii): Since $S/NI(G)$ is Cohen-Macaulay, $NI(G)$ is an unmixed ideal, that is all of its minimal prime ideals have the same height. Then (ii) follows from \cite[Lemma 2.2.]{SM}.  
	\medskip
	
	(ii) $\Rightarrow$ (iii):  By Lemma \ref{part} there exists a partition of the vertex set as $V(G)=X\sqcup Y$, such that $X =\{x_1,\ldots,x_n\}$ is minimal vertex cover of $G$, $Y =\{y_1,\ldots,y_n\}$ is a maximal independent set of $G$ and $\{\{x_1,y_1\},\ldots,\{x_n,y_n\}\}\subseteq E(G)$.
	Since $G$ is well-dominated, by Lemma \ref{dimVW} we have $\dim(S/NI(G))=n$. So applying Proposition \ref{VWWD} we conclude that every spanning tree of $G$ is a whisker graph. We need to show that $G$ is either a whisker graph, or $G=C_4$.
	If for any $1\leq i\leq n$ we have either $\deg_G(x_i)=1$ or
	$\deg_G(y_i)=1$, then $G$ has precisely $n$ vertices of degree one, which are adjacent to  distinct $n$ vertices in $G$. Hence, $G$ is a whisker graph, and in this case we are done. 
	
	So we may assume that there exists $1\leq j\leq n$ such that  $\deg_G(x_j)>1$ and  $\deg_G(y_j)>1$. We show that $G=C_4$. 
	Without loss of generality, we may assume that 
	$\deg_G(x_1)>1$ and  $\deg_G(y_1)>1$.
	Since $\deg_G(y_1)>1$ and $Y$ is an independent set of $G$, there exists $t\neq 1$ such that $x_t\in N_G(y_1)$.
	We put weights on the edges of $G$
	by assigning weight $1$
	to the edges $e_i=\{x_i,y_i\}$, for $1\leq i\leq n$, and $\{y_1,x_t\}$ and assigning
	weight $2$
	to all the other edges of $G$. Now applying Kruskal’s algorithm we obtain a spanning tree $T$ of $G$ such that $$\{\{y_1,x_t\}\}\cup\{\{x_1,y_1\},\ldots,\{x_n,y_n\}\}\subseteq E(T).$$
	
	By Proposition \ref{VWWD}, $T$ is a whisker graph. So any vertex in $T$ which is not of degree one, is adjacent to a vertex of degree one (leaf) of $T$.
	Since $x_1,x_t\in N_T(y_1)\subseteq X$, we have $\deg_T(y_1)\geq 2$. Hence, $y_1$ is adjacent to a vertex $x_\ell\in X$ in the graph $T$ which is a leaf of $T$. Then have $y_1,y_\ell\in N_T(x_\ell)$. This together with $\deg_T(x_\ell)=1$ implies that $\ell=1$. 
	Hence, $x_1$ is a leaf of $T$. Since by our assumption $\deg_G(x_1)>1$, there exists a vertex $z\in V(G)$ with $z\neq y_1$ such that $\{x_1,z\}\in E(G)$ and $\{x_1,z\}\notin E(T)$.
	
	We claim that $\{y_1,z\}\notin E(T)$. On the contrary, suppose that $\{y_1,z\}\in E(T)$. Then $z=x_p$ for some $p\neq 1$ and  $x_1,y_1,x_p$ form a triangle in $G$. Moreover, $T'=(T\setminus \{x_1,y_1\})\cup \{{\{x_1,x_p\}}\}$ is 
	a connected graph with $|E(T')|=|E(T)|=|V(G)|-1$. Hence, $T'$ is also a spanning tree of $G$ with $N_{T'}(x_1)=\{x_p\}$. Thus  $\{x_1\}\cup Y$ forms an independent set of $T'$. This contradicts to the fact that $T'$ is a whisker graph. Thus $\{y_1,z\}\notin E(T)$, as was claimed.    
	Let $$y_1=w_0,w_1,\ldots,w_k,w_{k+1}=z$$ be the unique path in $T$  connecting the vertices $y_1$ and $z$, where $k\geq 1$. Then $x_1,y_1,w_1,\ldots,w_k,w_{k+1}=z$ is a path in $T$ and adding the edge $\{x_1,z\}$ to this path gives a cycle in $T\cup \{{\{x_1,z\}}\}$. Hence, $T_1=(T\setminus \{w_k,z\})\cup \{{\{x_1,z\}}\}$ is again a spanning tree of $G$. Since $T_1$ is a whisker tree and $N_{T_1}(x_1)=\{y_1,z\}$, it follows that $x_1$ is adjacent to a leaf in $T_1$ which is either $y_1$ or $z$.   Notice that $y_1$
	is not a leaf  in $T_1$, as $x_1,w_1\in N_{T_1}(y_1)$. Hence, we conclude that $\deg_{T_1}(z)=1$. This implies that $\deg_{T}(z)=1$ and $N_T(z)=\{w_k\}$. Now, consider $T_2=(T\setminus \{w_{k-1},w_k\})\cup \{{\{x_1,z\}}\}$ which is again a spanning tree of $G$. We have $N_{T_2}(x_1)=\{y_1,z\}$. Since $T_2$ is a whisker tree,   $x_1$ is adjacent in $T_2$ to a leaf of $T_2$. Since $N_{T_2}(z)=\{x_1,w_k\}$, we conclude that $y_1$ is a leaf of $T_2$. Using this and the fact that $\{x_1,w_1\}\subseteq N_T(y_1)$, we deduce that $\{w_{k-1},w_k\}=\{y,w_1\}$ and  $N_T(y_1)=\{x_1,w_1\}$. Therefore, $k=1$ and $L: x_1,y_1,w_1,z$ is a path in $T$. As was shown above,   $\deg_T(x_1)=\deg_T(z)=1$ and $\deg_T(y_1)=2$. Since $T_2$ is a whisker graph and $z$ is not a leaf in $T_2$, its neighbor $w_1$ is a leaf in $T_2$ (note that $z$ is adjacent to only $w_1$ and $x_1$ in $T_2$ and $x_1$ is not a leaf of $T_2$). Thus  
	$\deg_{T_2}(w_1)=1$, which implies that 	$\deg_{T}(w_1)=2$.      
	Now,  $L: x_1,y_1,w_1,z$ is a path in $T$ with  $\deg_T(x_1)=\deg_T(z)=1$ and $\deg_T(y_1)=\deg_{T}(w_1)=2$. Hence $L$ is a connected component of $T$. Since $T$ is connected we conclude that $T=L$. So $G$ is a graph on $V(G)=\{x_1,y_1,w_1,z\}$ with the path $T=L$ as its spanning tree. Note that $G\neq T$, since $\deg_G(x_1)>1=\deg_T(x_1)$. So there exists an edge $e\in E(G)\setminus E(T)$. Notice that if $e=\{x_1,w_1\}$ or $e=\{y_1,z\}$, then $G$ has a spanning tree which is not a whisker graph, and this is a contradiction. So $e=\{x_1,z\}$ is the only edge in $E(G)\setminus E(T)$, which means that $G=C_4$.

	\medskip
	
	(iii) $\Rightarrow$ (iv): Let $V(G)=\{x_1,\ldots,x_n\}\cup\{y_1,\ldots,y_n\}$ with $x_1,\ldots,x_n$ of degree one and $\{x_i,y_i\}\in E(G)$ for all $i$. Then clearly, $NI(G)=(x_1y_1,\ldots,x_ny_n)$ and $x_1y_1,\ldots,x_ny_n$ is a regular $S$-sequence. 
	
	\medskip
	
	(iv) $\Rightarrow$ (v) and (v) $\Rightarrow$ (i): 
The implications are well-known facts for any ideal in $S$. Notice that if $G=C_4$, then $\dim(S/NI(G))=4-\gamma(G)=2$ and by Proposition \ref{lPd}(a), we have $\depth(S/NI(G))=\tau(G)=2$. So 
$S/NI(G)$ is Cohen-Macaulay.

(ii) $\Rightarrow$ (vi): Since $G$ is well-dominated, any maximal independent set of $G$ has cardinality $\gamma(G)$. Since $\alpha(G)=n$, the desired equality holds.

\medskip

(vi) $\Rightarrow$ (ii): We have $\alpha(G)=n$. Thus by assumption,
$\gamma(G)=n$. Let $D$ be an arbitrary minimal dominating set of $G$. We show that $|D|=n$. To this aim, first we show that $D^c=V(G)\setminus D$ is a dominating set of $G$. By contradiction, assume that there exists a vertex $z\in V(G)$ such that $N_G[z]\cap D^c=\emptyset$. Then $N_G[z]\subseteq D$. This implies that $D\setminus\{z\}$ is a  dominating set of $G$, which contradicts to the minimality of $D$. So $D^c$ is a dominating set, as claimed. Hence, $n=\gamma(G)\leq |D^c|=2n-|D|$. Thus $|D|\leq n=\gamma(G)\leq |D|$. So $|D|=n$. Since $D$ was chosen to be an arbitrary minimal dominating set of $G$, we conclude that $G$ is well-dominated.   
\end{proof}

Note that any well-covered bipartite graph without isolated vertices is a very well-covered graph.  So as a corollary of Theorem \ref{VW-CM}, we recover a characterization of bipartite graphs $G$ for which $S/NI(G)$ is Cohen-Macaulay, proved in \cite[Theorem 5.3.6]{L}.  We also add the equivalent condition $\gamma(G)=\alpha(G)$ to the statement of \cite[Theorem 5.3.6]{L}, using a result proved by Topp and Volkmann \cite{TV}. As Remark \ref{CM} shows, this equality is in general only a necessary condition for the Cohen-Macaulayness of $S/NI(G)$.  

\begin{Remark}\label{CM}
If $S/NI(G)$ is Cohen-Macaulay, then $NI(G)$ is an unmixed ideal, and hence $G$ is well-dominated. Thus any maximal independent set of $G$ has cardinality equal to $\gamma(G)$. Therefore,  $\gamma(G)=\alpha(G)$. On the other hand, for the graph $G=C_5$ we have $\gamma(G)=\alpha(G)=2$, while
 $G$ is not Cohen-Macaulay. Indeed, $\depth(S/NI(G))=2<\dim(S/NI(G))=3$, see Remark  \ref{C5}.	  
\end{Remark}

\begin{Corollary}\label{bipartiteCM}
Let $G$ be a connected bipartite graph with $n\geq 4$ vertices. Then the following are equivalent.
	\begin{enumerate}
		\item [(i)] $S/NI(G)$ is Cohen-Macaulay. 
		
		\item [(ii)] $G$ is well-dominated.  
		
		 \item [(iii)] $G=C_4$, or $G$ is a whisker graph.     

		\item [(iv)]  If $G\neq C_4$, then $NI(G)$ is a complete intersection.  
		
		\item [(v)]  If $G\neq C_4$, then $S/NI(G)$ is Gorenstein.
			
		\item [(vi)] $\gamma(G)=\alpha(G)$.  
	\end{enumerate}	  
\end{Corollary}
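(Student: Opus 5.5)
The plan is to reduce Corollary \ref{bipartiteCM} to Theorem \ref{VW-CM}. The bridge is the remark just made: a well-covered bipartite graph without isolated vertices is very well-covered. The subtlety is that none of (i), (ii), (vi) assumes well-coveredness outright. So the first step is to show that each of them forces $G$ to be well-covered, and hence very well-covered. Connectedness together with $n\geq 4$ rules out isolated vertices.

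\emph{Step 1: each of (i)--(vi) implies that $G$ is very well-covered.}
\begin{enumerate}
\item[(i)] By Remark \ref{CM}, Cohen-Macaulayness gives unmixedness, so $G$ is well-dominated. By Remark \ref{covered}, $G$ is then well-covered.
\item[(ii)] Well-dominated implies well-covered by Remark \ref{covered}.
\item[(iii)] $C_4$ is very well-covered. A connected whisker graph on $n\geq 4$ vertices has a perfect matching by its whiskers. Every maximal independent set contains exactly one endpoint from each whisker, so it is well-covered.
\item[(iv), (v)] For $G\neq C_4$ these give Cohen-Macaulayness directly, so they reduce to (i). For $G=C_4$ we are already in the very well-covered case.
\item[(vi)] This is where I would invoke Topp and Volkmann \cite{TV}: for bipartite graphs, $\gamma(G)=\alpha(G)$ forces $G$ to be a whisker graph or $C_4$, hence well-covered.
\end{enumerate}
A well-covered bipartite graph with bipartition $A\sqcup B$ has $|A|=|B|$ and every maximal independent set of size $n/2$. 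So once well-covered, $G$ is very well-covered with $2m=n$ vertices, where $2m\geq 4$.

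\emph{Step 2: apply Theorem \ref{VW-CM}.} With $G$ very well-covered and connected on at least $4$ vertices, Theorem \ref{VW-CM} gives (i)$\Leftrightarrow$(ii)$\Leftrightarrow$(iii)$\Leftrightarrow$(iv)$\Leftrightarrow$(v)$\Leftrightarrow$(vi). Condition (vi) of the theorem reads $\gamma(G)=\alpha(G)=m$. In our setting $\alpha(G)=m$ automatically once $G$ is very well-covered, so the theorem's (vi) matches (vi) of the corollary. Concretely, I would organize the argument as: each condition implies very well-coveredness by Step 1, and then every pair of conditions is equivalent by Theorem \ref{VW-CM}.

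\emph{Main obstacle.} The delicate part is (vi)$\Rightarrow$ well-covered, because $\gamma=\alpha$ does not obviously control the sizes of all maximal independent sets. If I could not cite \cite{TV} directly, I would try to argue as follows. Take a maximal independent set $I$ with $|I|<\alpha$. It is a minimal dominating set, so $\gamma\leq |I|<\alpha$, a contradiction. This shows $\gamma=\alpha$ already forces every maximal independent set to have size $\alpha$, so $G$ is well-covered. This elementary argument seems to settle the step without Topp--Volkmann; the reference would then only be needed to identify the resulting graphs as whisker graphs or $C_4$, which Theorem \ref{VW-CM} supplies anyway.
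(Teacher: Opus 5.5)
Your proposal is correct. It differs from the paper's proof in one real respect.

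The paper proves the cycle (i)$\Rightarrow$(ii)$\Rightarrow$(iii)$\Rightarrow$(iv)$\Rightarrow$(v)$\Rightarrow$(i), together with (i)$\Rightarrow$(vi). It uses Theorem~\ref{VW-CM} only for (ii)$\Rightarrow$(iii) and (iii)$\Rightarrow$(iv), and closes the loop with (vi)$\Rightarrow$(iii) by citing \cite[Theorem 2]{TV}.

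You instead show that each of the six conditions forces $G$ to be very well-covered, and then apply Theorem~\ref{VW-CM} wholesale. The only nontrivial case is (vi). Your elementary argument there is valid: a maximal independent set $I$ is a minimal dominating set, so $\gamma(G)\le |I|\le \alpha(G)=\gamma(G)$. This is exactly the argument the paper itself uses for (i)$\Rightarrow$(ii) in Corollary~\ref{alphaGamma}.

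You combine this with the fact that a connected well-covered bipartite graph on at least $4$ vertices is very well-covered with $\alpha(G)=n/2$, since both colour classes are maximal independent sets. Then (vi)$\Rightarrow$(iii) of Theorem~\ref{VW-CM} finishes the job. This makes the appeal to Topp--Volkmann unnecessary. In fact your route re-derives their bipartite characterization as a consequence of Theorem~\ref{VW-CM}. The paper's route is shorter on the page but depends on an external combinatorial theorem.

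In a final write-up, drop the tentative invocation of \cite{TV} in Step~1 and use the elementary argument directly.
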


\begin{proof}
(i) $\Rightarrow$ (ii): The ideal $NI(G)$ is unmixed. So $G$ is well-dominated.  
 
(ii) $\Rightarrow$ (iii): Since $G$ is well-dominated, it is a well-covered bipartite graph, see  Remark \ref{covered}. So it is a very well-covered graph. Thus the result holds by Theorem \ref{VW-CM}.

(iii) $\Rightarrow$ (iv): is proved in Theorem \ref{VW-CM}.

(iv) $\Rightarrow$ (v) and (v) $\Rightarrow$ (i): 
hold for any ideal in $S$. As was shown in the proof of Theorem \ref{VW-CM}, $S/NI(C_4)$ is Cohen-Macaulay.

(i) $\Rightarrow$ (vi): holds by Remark \ref{CM}.

(vi) $\Rightarrow$ (iii):  holds by \cite[Theorem 2]{TV}. 
\end{proof}

In \cite[Theorem 3]{TV}, Topp and Volkmann provided a characterization of connected block graphs $G$ satisfying the equality $\gamma(G)=\alpha(G)$. We notice that for chordal graphs this problem is closely related to the Cohen-Macaulayness of the ring $S/NI(G)$. A characterization of chordal graphs $G$ for which $S/NI(G)$  is Cohen-Macaulay was given in \cite[Theorem 5.4.9]{L}. Using this characterization, in Corollary \ref{alphaGamma} we show that this in fact gives a characterization of chordal graphs $G$ which satisfy $\gamma(G)=\alpha(G)$, generalizing the result of Topp and Volkmann to chordal graphs. 

\begin{Theorem}\cite[Theorem 5.4.9]{L}\label{chordalCM}
	Let $G$ be a chordal graph. Then the following conditions are equivalent.
\begin{enumerate}
	\item [(i)]  $S/NI(G)$ is Cohen-Macaulay.   
	
	\item [(ii)] $G$ is well-dominated. 
	
	\item [(iii)] $V(G)$ is the disjoint union of maximal cliques of $G$  which admit a free vertex.   
	
\item [(iv)]  $NI(G)$ is a complete intersection. 
\end{enumerate}	 	
\end{Theorem}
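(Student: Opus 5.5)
The plan is to prove the cycle (i) $\Rightarrow$ (ii) $\Rightarrow$ (iii) $\Rightarrow$ (iv) $\Rightarrow$ (i).

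\emph{The easy implications.} For (i) $\Rightarrow$ (ii): Cohen-Macaulayness forces $NI(G)$ to be unmixed. Since the minimal primes of $NI(G)$ are the $P_D$ with $D$ a minimal dominating set, all minimal dominating sets then have the same size. For (iv) $\Rightarrow$ (i): complete intersections are Cohen-Macaulay. For (iii) $\Rightarrow$ (iv): write $V(G)=C_1\sqcup\cdots\sqcup C_r$ with $f_i\in C_i$ a free vertex. A free vertex lies in a unique maximal clique, so $N_G[f_i]=C_i$ and $u_{f_i,G}=\prod_{v\in C_i}v$. Any vertex $y$ lies in some $C_j$; since $C_j$ is a clique, $C_j\subseteq N_G[y]$, so $u_{f_j,G}\mid u_{y,G}$. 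Hence $NI(G)=(u_{f_1,G},\ldots,u_{f_r,G})$. These monomials have pairwise disjoint supports, so they form a regular sequence, and $NI(G)$ is a complete intersection.

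\emph{The implication (ii) $\Rightarrow$ (iii).} I would argue by induction on $|V(G)|$. Let $z$ be a simplicial vertex, which exists by Dirac's theorem, and set $C=N_G[z]$; then $C$ is a maximal clique of $G$ with free vertex $z$. Put $G'=G\setminus C$, which is again chordal. If $G'$ is empty, we are done.

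First I show that $G'$ is well-dominated. Let $D'$ be a minimal dominating set of $G'$. Then $D'\cup\{z\}$ dominates $G$, and it is minimal:
\begin{itemize}
\item $z$ is its own private neighbor, because $N_G[z]=C$ is disjoint from $D'$;
\item each $d\in D'$ keeps a private neighbor $w\in V(G')$, because $w\notin C$ means $z$ does not dominate $w$.
\end{itemize}
Hence every minimal dominating set of $G'$ has size $\gamma(G)-1$, and $G'$ is well-dominated. By induction, $V(G')=C_1\sqcup\cdots\sqcup C_s$ with each $C_i$ a maximal clique of $G'$ having a free vertex $f_i$ of $G'$.

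\emph{Main obstacle.} It remains to lift this decomposition to $G$: each $C_i$ must stay a maximal clique of $G$ with a free vertex. It suffices to show that some free vertex of each $C_i$ has no neighbor in $C$. Suppose instead that every free vertex of $C_i$ (in particular $f_i$) is adjacent to some $x\in C$. I would then build a minimal dominating set of $G$ of size different from $s+1=\gamma(G)$, contradicting (ii). The first attempt is $\{x\}\cup\{f_j:j\neq i\}$, completed by a minimal set of vertices from $C_i\setminus\{f_i\}$ dominating whatever remains. When $C_i=\{f_i\}$ this gives a dominating set of size $s<\gamma(G)$, which is the desired contradiction. In the general case I would try two further ideas:
\begin{itemize}
\item choose $z$ more carefully, for instance as the last vertex of a perfect elimination ordering, so that the neighborhoods of the $C_i$ into $C$ are controlled by chordality;
\item use Remark~\ref{covered}, so that every maximal independent set has size $\gamma(G)$, and construct a maximal independent set containing $x$ together with two vertices of $C_i$'s neighborhood that avoid $f_i$, which would give one of the wrong size.
\end{itemize}
Getting this case analysis right is the step I expect to be hardest.
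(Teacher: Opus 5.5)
\textbf{There is a genuine gap in (ii)$\Rightarrow$(iii).} Your implications (i)$\Rightarrow$(ii), (iii)$\Rightarrow$(iv) and (iv)$\Rightarrow$(i) are correct. The preliminary reductions in (ii)$\Rightarrow$(iii) are also correct: $G'=G\setminus N_G[z]$ is well-dominated, $\gamma(G)=s+1$, and it suffices to find in each $C_i$ a $G'$-free vertex with no neighbor in $C$. But that lifting step is the whole content of (ii)$\Rightarrow$(iii), and you leave it open. Your first attempt only gives a contradiction when a single $x\in C$ is adjacent to all of $C_i$ (e.g.\ $C_i=\{f_i\}$). Otherwise the vertices of $C_i\setminus N_G(x)$ need an extra dominator, the set has $s+1=\gamma(G)$ elements, and nothing is contradicted. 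Your two further ideas are only named, not carried out. The paper does not prove this theorem; it quotes Leaman. Within the paper, however, (ii)$\Rightarrow$(iii) is available exactly as in Corollary~\ref{alphaGamma}: well-dominated implies well-covered by Remark~\ref{covered}, and well-covered chordal graphs have the decomposition (iii) by \cite[Theorem 2.1]{HHZ1}. Your second bullet points in that direction, but you would have to cite or reprove that structure theorem.

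\textbf{How to close the gap within your induction.} You need one chordality lemma. If $K$ is a clique and $u,v\notin K$ are adjacent, then $N_G(u)\cap K$ and $N_G(v)\cap K$ are comparable under inclusion. Otherwise, $a\in (N_G(u)\setminus N_G(v))\cap K$ and $b\in (N_G(v)\setminus N_G(u))\cap K$ would give the chordless $4$-cycle $u,a,b,v$.

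Now suppose every $G'$-free vertex of $C_i$ has a neighbor in $C$. By the lemma with $K=C$, the sets $N_G(v)\cap C$, for $v$ a $G'$-free vertex of $C_i$, form a chain of nonempty sets. So some $x\in C$ lies in all of them, i.e.\ $x$ is adjacent to every $G'$-free vertex of $C_i$. Then no vertex of $R=C_i\setminus N_G(x)$ is free in $G'$, so each vertex of $R$ has a neighbor in some $C_j$ with $j\neq i$. By the lemma with $K=C_j$ applied to the clique $R$, for each such $j$ a single $w_j\in C_j$ is adjacent to every vertex of $R$ that has a neighbor in $C_j$. Let $T$ be the set of these indices $j$. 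The set $\{x\}\cup\{w_j: j\in T\}\cup\{f_j: j\neq i,\ j\notin T\}$ dominates $G$: $x$ covers $C$ and $C_i\cap N_G(x)$, the $w_j$ cover $R$ and the $C_j$ with $j\in T$, and the $f_j$ cover the remaining $C_j$. This set has exactly $s$ elements, contradicting $\gamma(G)=s+1$. This is precisely the control of neighborhoods into $C$ that your first bullet was looking for, and with it your induction becomes a complete, self-contained proof.
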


\begin{Corollary}\label{alphaGamma}
	Let $G$ be a chordal graph. Then the following conditions are equivalent.
	\begin{enumerate}
		\item [(i)] 	$\gamma(G)=\alpha(G)$.
		
		\item [(ii)] $G$ is well-covered.  
		
		\item [(iii)] $G$ is well-dominated.     
		
		\item [(iv)] $V(G)$ is the disjoint union of maximal cliques of $G$  which admit a free vertex.  
	\end{enumerate}	 
\end{Corollary}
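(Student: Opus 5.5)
Theorem \ref{chordalCM} already gives (iii) $\Leftrightarrow$ (iv) of the corollary, since its items (ii) and (iii) are equivalent for chordal graphs. It therefore suffices to prove the chain (iii) $\Rightarrow$ (ii) $\Rightarrow$ (i) $\Rightarrow$ (iv).

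The first two implications are formal. For (iii) $\Rightarrow$ (ii), Remark \ref{covered} says any well-dominated graph is well-covered. For (ii) $\Rightarrow$ (i), recall that $\gamma(G)\leq \alpha(G)$ always holds. If $G$ is well-covered, every maximal independent set has size $\alpha(G)$. Take a minimum dominating set $D$ with $|D|=\gamma(G)$. In a general graph $D$ need not contain a maximal independent set, so this step is not automatic. I would argue instead via (iv): for a well-covered chordal graph, the simplicial vertices give a partition into maximal cliques with free vertices. This is the Prisner--Topp--Vestergaard characterization of well-covered chordal graphs: every vertex lies in exactly one simplicial clique. So (ii) $\Rightarrow$ (iv) can be quoted, or proved by induction on a simplicial vertex.

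Once (iv) holds, any dominating set must meet each clique $C_i$, because a free vertex of $C_i$ has closed neighborhood exactly $C_i$. The $C_i$ are disjoint, so $\gamma(G)\geq r$. Choosing one vertex from each $C_i$ gives a maximal independent set of size $r$, so $\gamma(G)=\alpha(G)=r$. This yields (iv) $\Rightarrow$ (i) directly, and also (iv) $\Rightarrow$ (iii) (alternatively via Theorem \ref{chordalCM}).

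The main step is (i) $\Rightarrow$ (iv), or equivalently (i) $\Rightarrow$ (iii). I would pick a family of simplicial vertices greedily to build a maximal independent set $A$ all of whose vertices are simplicial. Take a simplicial vertex $z_1$, delete $N_G[z_1]$, choose a simplicial vertex of the remaining chordal graph, and continue. The catch is that a vertex simplicial in a subgraph need not be simplicial in $G$. To avoid this, I would use the standard fact that in a chordal graph the cliques $N_G[z]$ of simplicial vertices $z$ cover enough of the graph. Concretely, $\gamma(G)$ equals the maximum number of vertices with pairwise disjoint closed neighborhoods, where one can restrict to simplicial ones (Farber's duality for strongly chordal graphs; for general chordal graphs I would argue directly).

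A cleaner route is to let $Z$ be a maximal set of simplicial vertices with pairwise disjoint closed neighborhoods. Any dominating set meets each $N_G[z]$, so $\gamma(G)\geq |Z|$. Extend $Z$ to a maximal independent set $A$. If $A=Z$, then $\alpha(G)=|A|\geq\gamma(G)$ gives nothing new. What is actually needed is the inequality $\alpha(G)>\gamma(G)$ whenever the cliques $N_G[z]$, $z\in Z$, fail to cover $V(G)$. To get it, I would consider a vertex $v$ outside $\bigcup_{z\in Z} N_G[z]$ and run induction on $G\setminus\bigcup_{z\in Z}N_G[z]$, using a simplicial vertex of that subgraph to produce a larger independent set while keeping a dominating set of size $|Z|$ plus something smaller. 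I expect this combinatorial step to be the main obstacle. If it resists, I would try instead to show directly that $\gamma(G)=\alpha(G)$ forces well-dominatedness, as in the argument for (vi) $\Rightarrow$ (ii) of Theorem \ref{VW-CM}, and then apply Theorem \ref{chordalCM}.
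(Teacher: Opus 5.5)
Your proposal has no implication out of (i), so the cycle never closes. You do get (ii), (iii) and (iv) equivalent, using Theorem~\ref{chordalCM} and the quoted Prisner--Topp--Vestergaard characterization, and all three imply (i). But (i) $\Rightarrow$ (iv), which you call the main obstacle, is only sketched: the greedy simplicial-vertex construction, the appeal to Farber-type duality and the induction on $G\setminus\bigcup_{z\in Z}N_G[z]$ are never carried out. Your fallback also fails. The argument for (vi) $\Rightarrow$ (ii) in Theorem~\ref{VW-CM} gets $|D|\le |V(G)|-\gamma(G)$ from the fact that $D^c$ is dominating. It then needs $|V(G)|=2n=2\gamma(G)$ to conclude $|D|\le\gamma(G)$. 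For a general chordal graph $|V(G)|-\gamma(G)$ can be far larger than $\gamma(G)$ (already for $K_m$), so that inequality no longer gives $|D|\le\gamma(G)$.

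The missing idea is a one-liner valid for every graph, and you already had the ingredient when you recalled $\gamma(G)\le\alpha(G)$. Every maximal independent set $A$ is a dominating set (Remark~\ref{covered}), so $\gamma(G)\le|A|\le\alpha(G)$. If $\gamma(G)=\alpha(G)$, then every maximal independent set has size $\alpha(G)$, i.e.\ $G$ is well-covered. So (i) $\Rightarrow$ (ii) is immediate, and composing it with your quoted (ii) $\Rightarrow$ (iv) closes the cycle with no combinatorial work. This is exactly the paper's proof, which cites Herzog--Hibi--Zheng for (ii) $\Rightarrow$ (iv).

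There is also a smaller gap in your (iv) $\Rightarrow$ (i). An arbitrary choice of one vertex per $C_i$ need not be independent, so take the free vertices. Moreover, $\alpha(G)=r$ needs the upper bound $\alpha(G)\le r$. This holds because an independent set meets each clique $C_i$ in at most one vertex and the $C_i$ cover $V(G)$.
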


\begin{proof}
(iv) $\Rightarrow$ (iii): hold by Theorem \ref{chordalCM}.

\medskip

(iii) $\Rightarrow$ (ii): holds by Remark \ref{covered}.   

\medskip

(ii) $\Rightarrow$ (iv): follows from \cite[Theorem 2.1]{HHZ1}. 

\medskip
	
(iii) $\Rightarrow$ (i): By the assumption, all minimal dominating sets of $G$ have the same cardinality, say $k$. So any maximal independet set of $G$ has cardinality $k$. Thus 	$\gamma(G)=\alpha(G)=k$. 

\medskip
	
	(i) $\Rightarrow$ (ii):  Let $B$ be a maximal independent set of $G$ with $\alpha(G)=|B|$, and let $A$ be an arbitrary maximal independent set of $G$. Since $A$ is a minimal dominating set of $G$, we have $$|B|=\alpha(G)=\gamma(G)\leq |A|\leq \alpha(G).$$ So $|A|=|B|=\alpha(G)$, for any maximal independent set $A$ of $G$.  
\end{proof}


\end{document}